\newtheorem{theorem}{Theorem}[section]
\newtheorem{lemma}[theorem]{Lemma}
\newtheorem{corollary}[theorem]{Corollary}
\newtheorem{definition}[theorem]{Definition}
\newtheorem{conjecture}[theorem]{Conjecture}
\theoremstyle{remark}
\newtheorem{example}{Example}
\newtheorem{remark}{Remark}
\newcommand{\mm}{\mathfrak{m}}
\DeclareMathOperator{\Ass}{Ass}
\DeclareMathOperator{\Spec}{Spec}
\DeclareMathOperator{\Ho}{H}
\DeclareMathOperator{\depth}{depth}
\title{Symbolic powers of ideals defining F-pure and strongly F-regular rings}
\author{Elo\'isa Grifo}
\address{Department of Mathematics, University of Virginia, Charlottesville, VA 22904-4135, USA}
\email{eloisa.grifo@virginia.edu}
\author{Craig Huneke}
\address{Department of Mathematics, University of Virginia, Charlottesville, VA 22904-4135, USA}
\email{huneke@virginia.edu}
\begin{document}

\keywords{symbolic powers, F-pure, strongly F-regular}
\subjclass[2010]{13A15, 13A35, 13H15}

\maketitle 

\begin{abstract}
Given a radical ideal $I$ in a regular ring $R$, the Containment Problem of symbolic and ordinary powers of $I$ consists of determining when the containment $I^{(a)} \subseteq I^b$ holds. By work of Ein-Lazersfeld-Smith, Hochster-Huneke and Ma-Schwede, there is a uniform answer to this question, but the resulting containments are not necessarily best possible. We show that a conjecture of Harbourne holds when $R/I$ is F-pure, and prove tighter containments in the case when $R/I$ is strongly F-regular.
\end{abstract}

\maketitle

\section{Introduction}

In the last decade, there has been growing interest in the comparison problem of symbolic and ordinary powers of ideals. This interest has been in large part motivated by the following containment result, first shown by Ein, Lazarsfeld and Smith \cite{ELS} for rings essentially of finite type over the complex numbers, later extended by Hochster and the second author \cite{comparison} to rings containing a field, and recently shown by Ma and Schwede in the mixed characteristic case \cite{MaSchwede}: given an ideal $I$ in a regular ring $R$, if $h$ is the largest height of an associated prime of $I$, then $I^{(hn)} \subseteq I^n$. However, this containment in general may not be best possible, and finding the smallest symbolic power that is contained in a fixed power of $I$ can be a fairly delicate question. The second author asked
if $I^{(3)} \subseteq I^2$ for unmixed height two reduced ideals in a regular ring. A conjecture raised by Harbourne suggests more finely tuned estimates:

\begin{conjecture}[Harbourne, see \cite{HaHu} and 8.4.3 \cite{Seshadri}]\label{Harbourne}
Let $I$ be a radical homogeneous ideal in $k[\mathbb{P}^N]$, and let $h$ be the maximal height of an associated prime of $I$.\footnote{In \cite{Seshadri} this conjecture is even stated for non-reduced ideals. However, in all our results, the ideals will necessarily be reduced.} Then for all $n \geqslant 1$, 
$$I^{(hn-h+1)} \subseteq I^n.$$
\end{conjecture}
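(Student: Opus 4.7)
The conjecture is open in full generality, so my plan is to establish it under the additional hypothesis promised in the abstract, namely that $R/I$ is F-pure. The overarching strategy is to sharpen the uniform Ein-Lazarsfeld-Smith / Hochster-Huneke containment $I^{(hn)} \subseteq I^n$ by exploiting a Frobenius splitting of $R/I$ to save exactly $h-1$ symbolic powers.

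After localizing and reducing to a regular local ring $(R,\mm)$ of characteristic $p > 0$, Fedder's criterion translates F-purity of $R/I$ into the existence, for each $q = p^e$, of an element $u_e \in (I^{[q]} :_R I) \setminus \mm^{[q]}$; equivalently, an $R$-linear splitting $\phi_e \colon F_*^e R \to R$ with $\phi_e(F_*^e I) \subseteq I$ and $\phi_e(F_*^e 1) = 1$. Because of the identity $\phi_e(r^q s) = r\phi_e(s)$, the splitting respects ordinary powers as well: writing an arbitrary element of $(I^n)^{[q]} = (I^{[q]})^n$ as $\sum r_i a_i^q$ with $a_i \in I^n$, one sees $\phi_e(F_*^e (I^n)^{[q]}) \subseteq I^n$. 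In particular, the implication $f^q \in (I^n)^{[q]} \Longrightarrow f \in I^n$ holds, and this descent is the engine of the argument.

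The heart of the proof is then to show that, given $f \in I^{(h(n-1)+1)}$, some Frobenius power $f^q$ lies in $(I^n)^{[q]}$. The na\"ive route, combining $f^q \in I^{(h(n-1)q+q)}$ with the Hochster-Huneke containment $I^{(hq)} \subseteq I^{[q]}$, produces only $I^{(hqn)} \subseteq (I^{[q]})^n$ and is therefore off by $(h-1)q$ symbolic powers. My plan is to close this gap by establishing a refined Frobenius-twisted containment of the form
$$I^{(h(q-1)+1)} \subseteq I^{[q]}$$
under the F-pure hypothesis, and then bootstrap to $I^{(h(n-1)+1)} \subseteq I^n$ by combining it with the $n$-fold symbolic-power product and the descent through $\phi_e$.

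The main obstacle is the Frobenius-twisted containment itself. It must upgrade F-purity from a qualitative splitting into a quantitative device that replaces one symbolic power $I^{(h)}$ by $I$ at every Frobenius layer. I anticipate this will require interlacing the Fedder element $u_e$ with the colon-ideal calculation driving the Hochster-Huneke proof, repeatedly absorbing the splitter $u_e \in I^{[q]} :_R I$ into the symbolic-power bookkeeping. That F-purity is expected to be precisely the minimum needed for such an absorption fits well with the companion claim that strengthening the hypothesis to strongly F-regular should produce an even sharper containment.
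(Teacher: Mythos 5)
Your framing — restrict to the F-pure case, localize at a maximal ideal, invoke Fedder's criterion, and try to beat the uniform containment by exactly $h-1$ — matches the paper, but the concrete plan has a gap at its center. The ``engine'' you propose, descending $f^q \in (I^n)^{[q]}$ to $f \in I^n$ through the compatible splitting $\phi_e$, gives no leverage: in a regular ring Frobenius is flat (Kunz), so $f^q \in (I^n)^{[q]}$ is \emph{equivalent} to $f \in I^n$, and your ``heart of the proof'' (show that $f \in I^{(hn-h+1)}$ forces $f^q \in (I^n)^{[q]}$) is a restatement of the theorem, not a reduction of it. F-purity must enter differently: in the paper one never descends $f^q$ itself but rather $sf^q$ for a Fedder element $s \in (I^{[q]}:I)\setminus \mathfrak{m}^{[q]}$. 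Concretely, Lemma \ref{lemma} shows $(I^{[q]}:I) \subseteq \left(I^n : I^{(hn-h+1)}\right)^{[q]}$ for $q \gg 0$, by writing $sf^q = (sf)f^{q-1}$, using $sf \in I^{[q]}$ to absorb one factor of $f$, and then verifying the unconditional containment $\left(I^{(hn-h+1)}\right)^{q-1} \subseteq \left(I^{n-1}\right)^{[q]}$ via Theorem \ref{hn+kn}, Theorem \ref{qout}, and the estimate $(hn-h+1)(q-1) \geqslant (hq-1)(n-1)+h(n-1)$ for $q \gg 0$. If $I^{(hn-h+1)} \nsubseteq I^n$, the colon ideal lies in $\mathfrak{m}$, hence $(I^{[q]}:I) \subseteq \mathfrak{m}^{[q]}$, contradicting Fedder. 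Your closing remark about ``absorbing the splitter $u_e$'' points in this direction, but you never set up the colon-ideal statement and the contradiction with Fedder's criterion that make it work.

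Moreover, the intermediate containment you hope to extract from F-purity, $I^{(h(q-1)+1)} \subseteq I^{[q]}$, is true for \emph{every} radical ideal in a regular ring of characteristic $p$: it is Lemma \ref{q comparison} with $k=0$, proved by checking at minimal primes (Frobenius powers introduce no embedded primes) and applying Lemma \ref{inclusion}. So it cannot carry the F-pure hypothesis, and the bootstrap you sketch cannot close the gap: from $f \in I^{(hn-h+1)}$ one gets $f^q \in I^{((hn-h+1)q)}$, while combining $I^{(hq-h+1)} \subseteq I^{[q]}$ with Theorem \ref{hn+kn} only yields $I^{(hnq)} \subseteq \left(I^{[q]}\right)^n$ — short by the same $(h-1)q$ you identified in the na\"ive route. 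Indeed, if such a bootstrap succeeded it would prove the containment unconditionally for radical ideals, contradicting the counterexamples of Dumnicki--Szemberg--Tutaj-Gasi\'{n}ska and Harbourne--Seceleanu. The missing idea is precisely the multiplication by the Fedder element before descending, together with the large-$q$ numerics above.
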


This conjecture holds for monomial ideals \cite[Example 8.4.5]{Seshadri}, star configurations \cite[Example 8.4.8]{Seshadri}, and general sets of points in $\mathbb{P}^2$ \cite{BoH} and in $\mathbb{P}^3$ \cite{Dumnicki2015}. However, there are homogeneous radical ideals of codimension $2$ in $k[x,y,z]$ for which the containment $I^{(3)} \subseteq I^2$ does not hold, showing Conjecture \ref{Harbourne} is false in general: a counterexample was first found by Dumnicki, Szemberg and Tutaj-Gasi\'{n}ska in \cite{counterexamples}, and later extended to a larger class of ideals by Harbourne and Seceleanu in \cite{HaSe}. We do not  know of any prime counterexamples to Conjecture \ref{Harbourne}, nor do we
know any examples where Conjecture \ref{Harbourne} does not hold for $n \gg 0$.

In proving the main result of \cite{comparison}, a stronger containment was shown:  for any $k \geqslant 0$, $I^{(hn + kn)} \subseteq (I^{(k+1)})^n$. Takagi and Yoshida \cite{takagiyoshida} improved this result for the case when $R/I$ is F-pure: $I^{(hn + kn-1)} \subseteq (I^{(k+1)})^n$. As a corollary, one obtains that $I^{(hn-1)} \subseteq I^n$, a result that was independently shown in \cite[Theorem 3.6]{HHfine}. In particular, this implies that Conjecture \ref{Harbourne} holds when $h=2$ and $R/I$ is F-pure.

Despite the results of \cite{takagiyoshida} and \cite{HHfine} mentioned above, in general the singularities of $R/I$ have not been
taken into account in trying to prove stronger bounds. We consider both the case in which $R/I$ is
F-pure (in characteristic $p$) or strongly F-regular (in characteristic $p$). By using standard techniques of reduction to characteristic $p$, we obtain similar results when $R$ is a regular ring essentially of finite type
over the complex numbers and $R/I$ is either dense F-pure type or a log terminal singularity.

Our first main result proves that Harbourne's Conjecture holds if $R/I$ is F-pure. 
Explicitly:

\begin{theorem}[Theorem \ref{Fpure}]
Let $R$ be a regular ring of characteristic $p>0$ (respectively essentially of finite type over
a field of characteristic $0$). Let $I$ be an ideal in $R$ with $R/I$ F-pure (respectively, of dense F-pure type), and let $h$ be the maximal height of a minimal prime of $I$. Then for all $n \geqslant 1$, $I^{(hn-h+1)} \subseteq I^n$.
\end{theorem}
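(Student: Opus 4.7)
The plan is to reduce the containment $I^{(hn-h+1)} \subseteq I^n$ to a Frobenius-level statement and then use F-purity to improve on the Hochster--Huneke bound.  Since $R$ is regular, every ideal is tightly closed, so it suffices to produce, for each $f \in I^{(h(n-1)+1)}$, a single test element $c \in R^\circ$ with $c f^q \in (I^n)^{[q]} = (I^{[q]})^n$ for all $q = p^e$.  As $f^q \in I^{((h(n-1)+1)q)} = I^{(h(n-1)q + q)}$, the task becomes the uniform Frobenius containment
\[
c \cdot I^{(h(n-1)q + q)} \subseteq (I^{[q]})^n.
\]
After standard reductions (completion, localization) one may assume $(R, \mm)$ is a complete regular local ring of characteristic $p > 0$ with $R/I$ F-pure.

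Two inputs are available.  First, the Hochster--Huneke containment $I^{(hm)} \subseteq I^{[m]}$ for $m = p^e$, which on its own recovers only the classical $I^{(hn)} \subseteq I^n$.  Second, Fedder's criterion, which identifies F-purity with the existence of $u \in (I^{[q]} : I) \setminus \mm^{[q]}$ for every $q$, yielding a compatibly split Frobenius map $\phi \colon F^e_* R \to R$ with $\phi(F^e_* I) \subseteq I$.  The idea is to use $\phi$ to close the gap of $(h-1)q$ symbolic powers between the exponent $hnq$ (what a naive application of Hochster--Huneke to $n$ factors would require) and the exponent $h(n-1)q + q$ (what we actually have).  Since a single application of $\phi$ saves one symbolic power --- this is the mechanism behind the Takagi--Yoshida improvement $I^{(hn-1)} \subseteq I^n$ --- combining a compatible family of such splittings with the nesting $I^{(a)} \cdot I^{(b)} \subseteq I^{(a+b)}$ should supply the full Harbourne saving of $h-1$.

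The main obstacle is the bookkeeping: one must ensure that (i) the accumulated multipliers from the iterated applications of the splitting combine into a single $c \in R^\circ$ independent of $q$, and (ii) the successive compatibilities with the symbolic Rees algebra of $I$ survive the iteration, so that the saved symbolic powers are genuine and not eaten by error terms.  Once the characteristic $p$ containment is in hand, the dense F-pure case follows by standard reduction mod $p$.
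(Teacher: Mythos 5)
Your proposal assembles the right ingredients (Fedder's criterion/a compatible splitting, plus the Frobenius containments of Hochster--Huneke), but it stops exactly where the proof has to happen: the claim that each application of the splitting $\phi$ ``saves one symbolic power'' and that iterating a compatible family accumulates the full saving of $h-1$ is never substantiated, and you yourself flag the two obstructions (a $q$-independent multiplier, and compatibility surviving the iteration) without resolving them. That is the entire content of the theorem, so there is a genuine gap. Moreover, the proposed mechanism is not the one that works. In the paper's argument F-purity is used only once per $q$: given $s \in (I^{[q]}:I)$ and $x \in I^{(hn-h+1)}$, write $s x^q = (sx)\,x^{q-1}$; then $sx \in I^{[q]}$, while $x^{q-1} \in I^{((hn-h+1)(q-1))}$, and the elementary inequality $(hn-h+1)(q-1) \geqslant (hq-1)(n-1)+h(n-1)$, valid once $q \geqslant (2h-1)(n-1)+1$, combined with $I^{(h(n-1)+(hq-1)(n-1))} \subseteq \left( I^{(hq)} \right)^{n-1}$ and $I^{(hq)} \subseteq I^{[q]}$, gives $x^{q-1} \in \left( I^{[q]} \right)^{n-1}$, hence $s x^q \in \left( I^{n} \right)^{[q]}$. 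The saving of $h-1$ thus comes from an asymptotic-in-$q$ numerical estimate applied to the $q-1$ ``untouched'' factors, not from repeated applications of the splitting; it is not at all clear that iterating $\phi$ compatibly would give more than the Takagi--Yoshida saving of $1$.

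Your tight-closure framing is also off target: to conclude $f \in I^n$ from $(I^n)^* = I^n$ you need a single $c \in R^\circ$, independent of $q$, with $c f^q \in (I^n)^{[q]}$ for all $q \gg 0$, whereas F-purity only supplies elements $s_q \in (I^{[q]}:I) \setminus \mm^{[q]}$ that vary with $q$ and need not be multiples of any fixed $c$. The paper sidesteps this entirely: the computation above shows $(I^{[q]}:I) \subseteq \left( I^{n} : I^{(hn-h+1)} \right)^{[q]}$ (flatness of Frobenius identifies $(A:B)^{[q]}$ with $(A^{[q]}:B^{[q]})$), so if $I^{(hn-h+1)} \nsubseteq I^n$ then $\left( I^{n} : I^{(hn-h+1)} \right) \subseteq \mm$, forcing $(I^{[q]}:I) \subseteq \mm^{[q]}$ for all large $q$ and contradicting Fedder's criterion; no uniform test element is ever required, only the containment for $q \gg 0$ depending on $n$ and $h$. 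If you replace the test-element reduction and the iterated-splitting heuristic by this one-step use of a Fedder element together with the numerical comparison, your outline becomes the paper's proof; the reduction to the local case and the reduction mod $p$ for the dense F-pure type statement are fine as you describe them.
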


Our second main result gives tighter containments if $R/I$ is strongly F-regular (Theorem \ref{strongly thm}). As a corollary, we prove that the symbolic and ordinary powers coincide in the strongly F-regular case if the height of $I$ is $2$.

\begin{theorem}[Theorem \ref{strongly thm}]
Let $R$ be a regular ring of characteristic $p>0$ (respectively essentially of finite type over a field of characteristic $0$), and let $I$ be an  ideal of height $h\geqslant 2$ such that $R/I$ is strongly F-regular (respectively, has log-terminal singularities). Then $I^{(d)} \subseteq I I^{(d-h+1)}$ for all $d \geqslant h$. In particular, $I^{(n(h-1)+1)} \subseteq I^{n+1}$ for all $n \geqslant 1$.
\end{theorem}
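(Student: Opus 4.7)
The plan is to work in positive characteristic $p > 0$; the characteristic zero statement reduces to a strongly F-regular model at $p \gg 0$ by the Hara--Watanabe--Takagi correspondence between log terminal singularities and dense F-regular type. First, the ``in particular'' claim $I^{(n(h-1)+1)} \subseteq I^{n+1}$ follows from the main containment by a short induction on $n$: with $d := n(h-1)+1 \geq h$ (using $h \geq 2$), the equality $d - h + 1 = (n-1)(h-1)+1$ lets the main statement and the inductive hypothesis $I^{((n-1)(h-1)+1)} \subseteq I^n$ combine as
$$I^{(n(h-1)+1)} \subseteq I \cdot I^{((n-1)(h-1)+1)} \subseteq I \cdot I^n = I^{n+1}.$$

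To prove the main containment $I^{(d)} \subseteq I \cdot I^{(d-h+1)}$ for $d \geq h$ in characteristic $p$, I would use a tight-closure approach. In the regular ring $R$ every ideal is tightly closed, so it suffices to find a test element $c \in R^\circ$ with $c f^q \in \bigl(I \cdot I^{(d-h+1)}\bigr)^{[q]}$ for every $f \in I^{(d)}$ and every $q = p^e$. The starting point is a pigeonhole containment
$$I^{(dq)} \subseteq \bigl(I^{(d-h+1)}\bigr)^{[q]} \cdot I^{((h-1)q)},$$
which is proved by localizing at each minimal prime $P$ of $I$ (using that $R/I$ being strongly F-regular forces $I$ to be radical) and factoring a monomial $x^a$ of degree $dq$ in the $\leq h$ generators of $P R_P$ as $x^{qb} \cdot x^c$ with $|b| = d - h + 1$ and $|c| = (h-1)q$. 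This is possible because $\sum_i \lfloor a_i/q \rfloor \geq d - h + 1$: indeed $\sum_i (a_i \bmod q)$ is a multiple of $q$ bounded above by $(h-1)q$, since $|a| = dq$ is itself a multiple of $q$.

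The crucial step is to use strong F-regularity of $R/I$ to absorb the residual factor $I^{((h-1)q)}$, producing the desired test element $c$ uniformly in $q$. This comes from a Fedder-type criterion for strong F-regularity, which provides Frobenius splittings of $R/I$ twisted by arbitrary elements of $R \setminus \bigcup \mathrm{Min}(I)$; combined with the pigeonhole above, these splittings should convert the symbolic factor $I^{((h-1)q)}$ into the Frobenius factor $I^{[q]}$ modulo $\bigl(I^{(d-h+1)}\bigr)^{[q]}$, yielding $c f^q \in \bigl(I \cdot I^{(d-h+1)}\bigr)^{[q]}$ and hence $f \in I \cdot I^{(d-h+1)}$. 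The main obstacle is this absorption step: F-purity alone only buys a ``shift by $1$'' (consistent with Theorem~\ref{Fpure} and the Takagi--Yoshida improvement), whereas the $(h-1)$-step shift demanded here requires the strict strengthening from F-pure to strongly F-regular. The extra flexibility in choosing the multiplier $c$ from the whole complement of $\bigcup \mathrm{Min}(I)$, rather than being tied to a single Frobenius splitting, is what is expected to close the gap, and making this uniform across all $q = p^e$ is the technical crux of the proof.
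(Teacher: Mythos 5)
Your reduction to characteristic $p$ and your induction deriving $I^{(n(h-1)+1)} \subseteq I^{n+1}$ from the main containment are both fine and match the paper. The genuine gap is that the step you yourself flag as ``the technical crux'' --- absorbing the residual factor $I^{((h-1)q)}$ using strong F-regularity, with a single multiplier $c$ working uniformly for all $q=p^e$ --- is never carried out: you only assert that Fedder/Glassbrenner-type splittings ``should convert'' $I^{((h-1)q)}$ into $I^{[q]}$ modulo $\left(I^{(d-h+1)}\right)^{[q]}$, and that is precisely the entire mathematical content of the theorem. In addition, your preliminary pigeonhole containment $I^{(dq)} \subseteq \left(I^{(d-h+1)}\right)^{[q]} I^{((h-1)q)}$ is justified only by checking it at the minimal primes of $I$; but a containment of ideals can only be verified locally at the \emph{associated primes of the right-hand side}, and the product ideal $\left(I^{(d-h+1)}\right)^{[q]} I^{((h-1)q)}$ may have embedded primes. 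This is exactly why the paper's Lemma \ref{q comparison} works with the single ideal $\left(I^{(k+1)}\right)^{[q]}$, whose associated primes are shown (via flatness of Frobenius and Auslander--Buchsbaum) to coincide with the minimal primes of $I$.

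What closes the gap in the paper is a concrete choice of multiplier together with a colon-ideal computation, rather than an appeal to splittings. One picks $t \in \left(I^d : I^{(d)}\right)$ not in any minimal prime of $I$ (such $t$ exists because $I^d$ and $I^{(d)}$ agree at the minimal primes), and proves Lemma \ref{strongly lemma}: for every $q$ and every $s \in \left(I^{[q]}:I\right)$, $st\left(I^{(d)}\right)^{[q]} \subseteq sI^d\left(I^{(d)}\right)^{q-1} \subseteq I^{[q]} I^{d-1}\left(I^{(d)}\right)^{q-1} \subseteq I^{[q]} I^{(dq-1)} \subseteq I^{[q]}\left(I^{(d+1-h)}\right)^{[q]}$, where the last inclusion uses Lemma \ref{q comparison} and $h \geqslant 2$ (so $dq-1 \geqslant dq-h+1$). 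If the desired containment failed, then locally $\left(II^{(d+1-h)}:I^{(d)}\right) \subseteq \mm$, and the lemma would give $t\left(I^{[q]}:I\right) \subseteq \mm^{[q]}$ for all $q$, contradicting Glassbrenner's criterion (Theorem \ref{strongly criterion}) applied with $c=t$. So the ``extra flexibility'' of strong F-regularity over F-purity is used exactly through the freedom to take $c=t \in \left(I^d:I^{(d)}\right)$ rather than a unit, and uniformity in $q$ is automatic because the lemma holds for every $q$; without an argument of this kind, your proposal does not yet prove the theorem.
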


\section{Preliminaries}
Let $A$ be a Noetherian ring of characteristic $p>0$, and denote by $F^e$ the $e$-th iteration \emph{Frobenius} map $F(r)=r^p$. The ring $A$ is called \emph{F-finite} if $A$ is a finite module over itself via the action of the Frobenius map. We say that $A$ is F-pure if for any $A$-module $M$, $F \otimes 1 \!: A \otimes M \longrightarrow A \otimes M$ is injective. F-pure rings were introduced by Hochster and Roberts in \cite{HRFpurity}. Regular rings are F-pure, and squarefree monomial ideals define F-pure rings. For a complete characterization of one-dimensional F-pure rings, see \cite{GotoWatanabe1dimFpure}. 

Given an ideal $I$ in a Noetherian ring of characteristic $p>0$, the Frobenius powers $I^{[p^e]}$ of $I$ are obtained by repeatedly applying the Frobenius map to $I$. If $I= \left(a_1, \ldots, a_n \right)$, then $I^{[p^e]} = \left( a^{p^e}: a \in I \right) = \left( a_1^{p^e}, \ldots, a_n^{p^e} \right)$. The following theorem characterizes ideals that define F-pure rings:

\begin{theorem}[Fedder's Criterion for F-purity, Theorem 1.12 in \cite{fedder}]\label{fedder}
Let $(R, \mm)$ be a regular local ring of characteristic $p>0$. Given an ideal $I$ in $R$, $R/I$ is F-pure if and only if for all $q = p^e \gg 0$,
$$\left(I^{[q]} : I \right) \nsubseteq \mm^{[q]}.$$
\end{theorem}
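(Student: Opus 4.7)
My plan is to prove Fedder's Criterion by translating F-purity of $R/I$ into the existence of a Frobenius splitting, and then identifying the module of potential splittings with a colon ideal inside $R$. The argument rests on two ingredients: (i) in the Noetherian local setting, $R/I$ is F-pure if and only if the Frobenius map $R/I \to F^e_*(R/I)$ splits as a map of $R/I$-modules; and (ii) for a regular local ring $R$, Kunz's theorem gives that $F^e_*R$ is free over $R$, and moreover $\mathrm{Hom}_R(F^e_*R, R) \cong F^e_*R$ as $F^e_*R$-modules, generated by a canonical Frobenius trace map $\Phi$.

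First I would establish the splitting reformulation: F-purity of $R/I$ is equivalent to the existence of an $(R/I)$-linear $\varphi \colon F^e_*(R/I) \to R/I$ with $\varphi(F^e_*(1)) = 1$, for some (equivalently, all sufficiently large) $e$. This is standard in the local Noetherian context, using that injectivity of Frobenius tensored with the injective hull of the residue field forces a splitting when $R$ is F-finite or complete.

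Next I would identify the relevant Hom module. Applying $\mathrm{Hom}_R(F^e_*(-), R/I)$ to the exact sequence $0 \to I \to R \to R/I \to 0$, and using $\mathrm{Hom}_R(F^e_*R, R/I) \cong F^e_*(R/I^{[q]})$ (obtained by reducing the isomorphism $\mathrm{Hom}_R(F^e_*R, R)\cong F^e_*R$ modulo $I$), one gets a natural identification
$$\mathrm{Hom}_{R/I}\bigl(F^e_*(R/I),\, R/I\bigr) \;\cong\; F^e_*\!\bigl((I^{[q]} : I)/I^{[q]}\bigr).$$
Under this correspondence, a class $F^e_*(u)$ with $u \in (I^{[q]} : I)$ goes to the map $F^e_*(r) \mapsto \Phi(F^e_*(ur)) \bmod I$. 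In particular, the existence of a splitting $\varphi$ as above is equivalent to the existence of some $u \in (I^{[q]}:I)$ such that $\Phi(F^e_*(u))$ is a unit of $R$, equivalently $\Phi(F^e_*(u)) \notin \mm$.

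The main obstacle is the final translation: showing that $\Phi(F^e_*(u)) \notin \mm$ is equivalent to $u \notin \mm^{[q]}$. This is where regularity is used essentially. Choosing a regular system of parameters $x_1, \ldots, x_d$ for $R$, the monomials $x_1^{a_1} \cdots x_d^{a_d}$ with $0 \leqslant a_i < q$ form a free basis for $F^e_*R$ over $R$, and the generator $\Phi$ of $\mathrm{Hom}_R(F^e_*R, R)$ can be taken to be the dual projection onto the top monomial $x_1^{q-1}\cdots x_d^{q-1}$. From this explicit description, $\Phi(F^e_*(u))$ is a unit precisely when the coefficient of the top monomial in the expansion of $u$ is a unit, which happens precisely when $u \notin \mm^{[q]} = (x_1^q, \ldots, x_d^q)$. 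Combining with the reduction to splittings completes the equivalence $R/I$ is F-pure $\Longleftrightarrow$ $(I^{[q]}:I) \not\subseteq \mm^{[q]}$.
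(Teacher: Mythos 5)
The paper offers no proof of this statement --- it is quoted verbatim as Theorem 1.12 of Fedder's paper --- so the comparison is with Fedder's own argument, and your route is essentially that standard one: reformulate F-purity as Frobenius splitting, use freeness of $F^e_*R$ and the cyclic generator $\Phi$ of $\operatorname{Hom}_R(F^e_*R,R)$, and identify $\operatorname{Hom}_{R/I}(F^e_*(R/I),R/I)$ with $F^e_*\bigl((I^{[q]}:I)/I^{[q]}\bigr)$. However, your final translation contains a genuine error. The asserted equivalence ``$\Phi(F^e_*(u))$ is a unit $\iff u \notin \mm^{[q]}$'' for a \emph{fixed} element $u$ is false in exactly the direction you need: $u\notin\mm^{[q]}$ only says that \emph{some} coefficient of $u$ in the monomial basis $\{x^a : 0\leqslant a_i<q\}$ is a unit, not the coefficient of the top monomial $x_1^{q-1}\cdots x_d^{q-1}$. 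For instance $u=1$ (or $u=x_1^{q-1}$ with $d\geqslant 2$) lies outside $\mm^{[q]}$ yet $\Phi(F^e_*(u))\in\mm$; taking $I=0$, your argument as written would fail to produce a splitting of $R\to F^e_*R$ from the element $1\in (0:0)=R$. The correct statement, which is Fedder's key lemma, quantifies over premultiplications: $u\notin\mm^{[q]}$ if and only if $\Phi(F^e_*(ru))\notin\mm$ for some $r\in R$ (take $r=x^{b}$ with $b_i=q-1-a_i$, where $x^a$ is a basis monomial whose coefficient in $u$ is a unit). Since $(I^{[q]}:I)$ is an ideal, $ru$ still lies in it, and this repairs the ``if'' direction of the criterion.

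Two further points. First, the same lemma (for an ideal $J$, $v\in J^{[q]}$ iff $\varphi(F^e_*(v))\in J$ for every $\varphi\in\operatorname{Hom}_R(F^e_*R,R)$, proved by the same basis computation together with flatness of Frobenius) is silently needed to justify your identification $\operatorname{Hom}_{R/I}(F^e_*(R/I),R/I)\cong F^e_*\bigl((I^{[q]}:I)/I^{[q]}\bigr)$: that a map $F^e_*R\to R/I$ killing $F^e_*I$ must come from some $u$ with $uI\subseteq I^{[q]}$ is exactly this lemma with $J=I$, not a formal consequence of reducing the isomorphism $\operatorname{Hom}_R(F^e_*R,R)\cong F^e_*R$ modulo $I$. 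Second, freeness of $F^e_*R$ on the stated monomial basis and the cyclic generation of $\operatorname{Hom}_R(F^e_*R,R)$ require $R$ to be F-finite (or complete with F-finite residue field), and you flag the analogous hypothesis only for the splitting reformulation; since the theorem as stated in the paper carries no F-finiteness assumption, a complete proof must include the reduction to the complete or F-finite case (via faithful flatness of $R\to\widehat R$, under which $(I^{[q]}:I)$, $\mm^{[q]}$, and F-purity behave well), which your outline does not carry out.
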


If $A$ is F-finite and reduced, the ring of $p^e$-roots of $A$ is denoted by $A^{1/p^e}$, and the inclusion $A \hookrightarrow A^{1/p^e}$ can be identified with $F^e$. An F-finite reduced ring $A$ is \emph{strongly F-regular} if given any $f \in A$, $f$ a nonzero divisor in $A$, there exists $q=p^e$ such that the inclusion $f^{1/q} A \longrightarrow A^{1/q}$ splits. Strongly F-regular rings, first introduced in \cite{HHStrong}, are F-pure, normal, and Cohen-Macaulay. Veronese subrings of polynomial rings, locally acyclic cluster algebras \cite{ClusterAlgSingularities} and certain ladder determinantal varieties \cite{GS} are strongly F-regular. See also \cite{SchubertVarieties}.

There is a criterion similar to Theorem \ref{fedder} for strongly F-regular rings: 

\begin{theorem}[Glassbrenner's Criterion for strong F-regularity, \cite{Glassbrenner}]\label{strongly criterion}
Let $(R, \mm)$ be an F-finite regular local ring of prime characteristic $p$. Given a proper radical ideal $I$ of $R$, $R/I$ is strongly F-regular if and only if for each element $c\in R$ not in any minimal prime of $I$, $c \left(I^{[q]} : I \right) \nsubseteq \mm^{[q]}$ for all $q=p^e \gg 0$.
\end{theorem}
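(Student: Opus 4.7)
I would prove Glassbrenner's criterion in close parallel with Fedder's (Theorem~\ref{fedder}), tracking the additional element $c$ from the definition of strong $F$-regularity. The central input is the structure of $\operatorname{Hom}_R(R^{1/q}, R)$ for $(R, \mm)$ an $F$-finite regular local ring: this module is free of rank one over $R^{1/q}$, and a chosen generator $\phi_0$ induces Fedder's identification
\[
\operatorname{Hom}_{R/I}\bigl((R/I)^{1/q},\, R/I\bigr) \;\cong\; \frac{(I^{[q]} :_R I)}{I^{[q]}},
\]
where $u \in (I^{[q]}:_R I)$ corresponds to the $R/I$-linear map induced by $\psi_u : R^{1/q} \to R$, $x \mapsto \phi_0(u^{1/q} x)$.

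By definition, $R/I$ is strongly $F$-regular if and only if for every $c \in R$ not in any minimal prime of $I$, the map $R/I \to (R/I)^{1/q}$ sending $1 \mapsto c^{1/q}$ admits an $R/I$-linear retraction for some $q = p^e$. Under Fedder's identification, such a retraction corresponds to an element $u \in (I^{[q]}:_R I)$ with $\phi_0((uc)^{1/q}) \notin \mm$; by an argument parallel to the one in Fedder's proof (using that $\phi_0$ sends $(\mm^{[q]})^{1/q} = \mm R^{1/q}$ into $\mm$, and that one is free to replace $u$ by $r u$ for any $r \in R$ without leaving $(I^{[q]}:_R I)$), such a $u$ exists if and only if $c(I^{[q]}:_R I) \nsubseteq \mm^{[q]}$. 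Thus, at each fixed $q$, a retraction exists if and only if $c(I^{[q]}:_R I) \nsubseteq \mm^{[q]}$.

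Finally, I would upgrade ``some $q$'' to ``all $q \gg 0$''. Since strong $F$-regularity implies $F$-purity, there exist Frobenius splittings $\tau_{e'} : (R/I)^{1/p^{e'}} \to R/I$ with $\tau_{e'}(1) = 1$ for every $e' \geq 0$. Given a splitting $\sigma_0 : (R/I)^{1/q_0} \to R/I$ with $\sigma_0(c^{1/q_0}) = 1$, composing the $p^{e'}$-th Frobenius pushforward of $\sigma_0$ with $\tau_{e'}$ yields a splitting sending $c^{1/(q_0 p^{e'})} \mapsto 1$; running the equivalence from the previous paragraph backwards then gives $c(I^{[q]}:_R I) \nsubseteq \mm^{[q]}$ for all $q \geq q_0$. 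The main technical subtlety lies in Fedder's identification itself, which requires care because $(R/I)^{1/q}$ is the reduction of the naive quotient $R^{1/q}/IR^{1/q}$ rather than that quotient itself; this is a classical computation carried out in detail in \cite{fedder}.
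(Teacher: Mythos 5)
The paper gives no proof of this statement --- it is quoted from \cite{Glassbrenner} --- so there is nothing internal to compare against; your sketch follows the standard Fedder-style route by which the criterion is proved in the cited literature. The argument is correct as a sketch: the fixed-$q$ equivalence (a retraction of $1 \mapsto c^{1/q}$ exists iff $c\left(I^{[q]}:I\right) \nsubseteq \mm^{[q]}$) is handled properly, including the freedom to replace $u$ by $ru$ inside the ideal $\left(I^{[q]}:I\right)$ and to rescale the retraction by a unit, and composing the $p^{e'}$-th root of $\sigma_0$ with an F-splitting $\tau_{e'}$ correctly upgrades ``some $q$'' to ``all $q \geqslant q_0$'' (with threshold depending on $c$, which is the intended reading of the statement); the ingredients you defer --- rank-one freeness of $\operatorname{Hom}_R(R^{1/q},R)$ over $R^{1/q}$ and Fedder's identification with $\left(I^{[q]}:I\right)/I^{[q]}$, including the distinction between $(R/I)^{1/q}=R^{1/q}/I^{1/q}$ and the naive quotient $R^{1/q}/IR^{1/q}$ --- are indeed classical and carried out in \cite{fedder}.
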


\vskip 10mm

Now let $R$ be a Noetherian ring of any characteristic, and $I$ an ideal in $R$. The \emph{$n$-th symbolic power} of $I$ is given by
$$I^{(n)} \quad = \bigcap_{P \in \Ass(R/I)} I^n R_P \cap R.$$
Note that $I^{(n+1)} \subseteq I^{(n)}$ for each $n$, and that given any $a, b \geqslant 1$, $I^{(a)} I^{(b)} \subseteq I^{(a+b)}$. Moreover, $I^n \subseteq I^{(n)}$. The converse containment does not hold in general, so it is natural to ask for what values of $k$ and $m$ is $I^{(k)} \subseteq I^m$. This question is open in general.

\vskip 5mm

To prove our main results, we will need the following results regarding symbolic powers:

\begin{theorem}[Theorem 2.2 (1) in \cite{HHfine}]\label{qout}
Let $R$ be a regular ring of prime characteristic $p>0$, and $I$ an ideal in $R$. Let $h$ be the largest number of generators of $I$ after localizing at an associated prime of $I$. Then for every $n \geqslant 1$ and $q=p^e$, $I^{(hnq)} \subseteq \left( I^{(n)} \right)^{[q]}$.
\end{theorem}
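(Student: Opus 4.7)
The plan is to reduce the containment to a pointwise check at associated primes and then to an elementary pigeonhole argument in a regular local ring.

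First, I would establish that $\Ass(R/(I^{(n)})^{[q]}) \subseteq \Ass(R/I)$. For any ideal $J$ in the regular ring $R$, flatness of Frobenius (Kunz's theorem) gives $\Ass(R/J^{[q]}) = \Ass(R/J)$. Applying this with $J = I^{(n)}$, and using that $\Ass(R/I^{(n)}) \subseteq \Ass(R/I)$ (immediate from the definition of the symbolic power as an intersection of contractions of localizations at associated primes of $I$), yields the claim. Thus, in order to show $I^{(hnq)} \subseteq (I^{(n)})^{[q]}$, it suffices to verify the containment locally, i.e., $I^{(hnq)} R_P \subseteq (I^{(n)})^{[q]} R_P$ for every $P \in \Ass(R/I)$.

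Next, I would reduce the local statement to an inclusion of powers of a small ideal. Fix $P \in \Ass(R/I)$. From the definition of $I^{(m)}$ as an intersection containing the contraction $I^m R_P \cap R$, together with the containment $I^m \subseteq I^{(m)}$, one obtains $I^{(m)} R_P = (IR_P)^m$ for every $m \geqslant 1$. Since Frobenius powers commute with localization, the required containment becomes
$$(IR_P)^{hnq} \subseteq \bigl((IR_P)^n\bigr)^{[q]}$$
in the regular local ring $R_P$. By hypothesis, $IR_P$ is generated by at most $h$ elements, say $a_1, \ldots, a_h$.

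Finally, I would apply a pigeonhole argument. The left-hand ideal is generated by monomials $a_1^{i_1} \cdots a_h^{i_h}$ with $i_1 + \cdots + i_h = hnq$, and for any such tuple some exponent $i_j$ is at least $nq$. Writing $i_j = nq + s$, that monomial factors as $(a_j^n)^q \cdot a_j^s \prod_{k \neq j} a_k^{i_k}$, which lies in $\bigl((a_1, \ldots, a_h)^n\bigr)^{[q]}$ since $(a_j^n)^q$ is among its generators. This completes the reduction. I expect the most delicate input to be the equality of associated primes under Frobenius powers in a regular ring; once that is in hand, the combinatorial heart of the argument is the single-line pigeonhole estimate above.
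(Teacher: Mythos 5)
Your skeleton (Frobenius preserves associated primes, localize, pigeonhole) is exactly the argument the paper itself gives for Lemma \ref{q comparison}, and both the pigeonhole step and your observation that $I^{(m)}R_P=(IR_P)^m$ for $P\in\Ass(R/I)$ are correct. The gap is the claim that $\Ass(R/I^{(n)})\subseteq\Ass(R/I)$ is ``immediate from the definition.'' The theorem is stated for an arbitrary ideal $I$, and with the definition $I^{(n)}=\bigcap_{P\in\Ass(R/I)}I^nR_P\cap R$ all that follows formally is that $\Ass(R/I^{(n)})$ consists of associated primes of $I^n$ which are \emph{contained in} some associated prime of $I$; such primes need not be associated to $I$. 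Concretely, let $\mathfrak p\subset k[x,y,z]$ be a space monomial curve prime with $(x,y,z)\in\Ass\bigl(k[x,y,z]/\mathfrak p^2\bigr)$, put $R=k[x,y,z,w]$, $\mathfrak q=\mathfrak p R$, $\mathfrak m=(x,y,z,w)$, and $I=\mathfrak q\cap\mathfrak m^k$ with $k\gg0$, so $\Ass(R/I)=\{\mathfrak q,\mathfrak m\}$. Since $\mathfrak m\in\Ass(R/I)$ and every associated prime of $I^2$ is graded (hence contained in $\mathfrak m$), the component of the defining intersection at $\mathfrak m$ is already $I^2$, so $I^{(2)}=I^2$; on the other hand, localizing at $Q=(x,y,z)$ kills the $\mathfrak m$-primary component, so $I^2R_Q=\mathfrak q^2R_Q$ has $QR_Q$ associated, whence $Q\in\Ass(R/I^{(2)})\setminus\Ass(R/I)$. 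At such a prime $Q$ your argument collapses: the hypothesis bounds the number of generators of $I_P$ only at associated primes of $I$, and the identity $I^{(m)}R_Q=I^mR_Q$, which you derived using $Q\in\Ass(R/I)$, is no longer available. So your reduction checks the containment at $\Ass(R/I)$, whereas it must be checked at all of $\Ass\bigl(R/(I^{(n)})^{[q]}\bigr)=\Ass(R/I^{(n)})$, which can be strictly larger when $I$ has embedded primes.

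If $I$ is radical --- the only case in which this paper actually uses Theorem \ref{qout}, and the setting of Lemma \ref{q comparison} --- then $I^{(n)}$ is an intersection of primary ideals whose radicals are the minimal primes of $I$, so $\Ass(R/I^{(n)})\subseteq\operatorname{Min}(I)=\Ass(R/I)$ and your proof is complete and essentially identical to the paper's own argument (which says explicitly ``since $I^{(k+1)}$ has no embedded primes, we only need to check the containment at the minimal primes of $I$''). For a general ideal with embedded primes the statement remains true, but handling the extra associated primes of $I^{(n)}$ requires an additional device, as in the multiplier/tight-closure style arguments of \cite{comparison} and \cite{HHfine}; asserting the containment of associated prime sets is not enough.
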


\begin{theorem}[Theorem 2.6 in \cite{comparison}]\label{hn+kn}
Let $I$ be an ideal of a regular ring of prime characteristic $p>0$. Let $h$ be the largest analytic spread of $I_P$ for $P$ an associated prime of $I$. Then for all $n \geqslant 1$ and all $k \geqslant 0$, $I^{(hn+kn)} \subseteq \left( I^{(k+1)} \right)^n$.
\end{theorem}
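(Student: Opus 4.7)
The plan is to use the Frobenius / tight closure strategy standard in positive characteristic. Since $R$ is regular, every ideal equals its tight closure, so to prove $I^{(hn+kn)} \subseteq (I^{(k+1)})^n$ it suffices to produce, for each $f \in I^{(hn+kn)}$, a fixed nonzero $c \in R$ (not in any associated prime of $I$) such that
\[
c\, f^q \in \bigl( (I^{(k+1)})^n \bigr)^{[q]} = \bigl( (I^{(k+1)})^{[q]} \bigr)^n \qquad \text{for all } q = p^e \gg 0,
\]
since then tight closure triviality forces $f \in (I^{(k+1)})^n$. The identity $(J^n)^{[q]} = (J^{[q]})^n$ used above follows at once from comparing monomial generators.

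The core ingredient for verifying this Frobenius-level membership is Theorem \ref{qout}: applying it with $n$ there replaced by $k+1$ gives $I^{(h(k+1)q)} \subseteq (I^{(k+1)})^{[q]}$. Thus it would be enough to realize $c f^q$ as a sum of $n$-fold products of elements of $I^{(h(k+1)q)}$. From $f \in I^{((h+k)n)}$ and $(I^{(m)})^q \subseteq I^{(mq)}$ we get $f^q \in I^{((h+k)nq)}$, but a naive splitting of $f^q$ into $n$ factors only places each factor in $I^{((h+k)q)}$, which falls short of the target symbolic exponent $h(k+1)q$ per factor by exactly $k(h-1)q$. This discrepancy, reflecting that the analytic spread $h$ is generally smaller than the product $h(k+1)$ obtained from a crude iteration of Theorem \ref{qout}, is the combinatorial heart of the matter.

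To close the gap I would localize at each associated prime $P$ of $I$ of maximal analytic spread $h$. After a faithfully flat residue-field extension if necessary, $I_P$ admits a minimal reduction $J_P = (a_1, \ldots, a_h)$ generated by $h$ elements, so that $I_P^{s+1} = J_P I_P^s$ for all $s \gg 0$. A Skoda / Artin--Rees analysis based on this reduction controls the difference between ordinary and symbolic powers of $I_P$ in a uniform way, so that the shortfall $k(h-1)q$ can be absorbed into a single multiplier $c \in R$ chosen to avoid every associated prime of $I$, uniformly in $q = p^e$. Pushing the resulting local decomposition back to $R$ supplies the required factorization of $c f^q$.

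The main obstacle is precisely this bridging step: producing the test element $c$ and carrying out the Skoda / Artin--Rees compensation uniformly in $q$ to decompose $c f^q$ as a sum of $n$-fold products in $I^{(h(k+1)q)}$. Once that is established, the Frobenius passage combined with the vanishing of tight closure in the regular ring $R$ completes the argument.
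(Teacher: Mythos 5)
Your opening reduction is the right one, and it is how the argument in \cite{comparison} begins: in a regular ring tight closure is trivial, so it suffices to produce a fixed $c$ in no minimal prime of $R$ (it may even depend on $f$, but not on $q$) with $c f^q \in \bigl((I^{(k+1)})^{[q]}\bigr)^n$ for all $q \gg 0$. But the step you explicitly leave open --- ``absorbing'' the per-factor shortfall $k(h-1)q$ into a single multiplier $c$ via a Skoda/Artin--Rees argument --- is not a missing technicality; as formulated it cannot be carried out. You are asking to write $c f^q$ as a sum of $n$-fold products of elements of $I^{(h(k+1)q)}$, i.e.\ $c f^q \in \bigl(I^{(h(k+1)q)}\bigr)^n \subseteq I^{(nh(k+1)q)}$. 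Localize at a minimal prime $P$ of $I$ of height $h$ and choose $f \in I^{((h+k)n)}$ whose order at $P$ is exactly $(h+k)n$; since the order function of the regular local ring $R_P$ is a valuation, the containment forces $\mathrm{ord}_P(c) \geqslant nk(h-1)q$ for all $q$, so $c=0$ whenever $k \geqslant 1$ and $h \geqslant 2$ --- exactly the cases where the theorem says more than $I^{(hn)} \subseteq I^n$. A fixed multiplier (or any Brian\c{c}on--Skoda/reduction-number comparison, which shifts exponents by a constant) can only compensate a $q$-independent defect, never one growing linearly in $q$.

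The source of the impossible gap is your choice of Theorem \ref{qout} as the per-factor input, which demands symbolic order $h(k+1)q$ to enter $(I^{(k+1)})^{[q]}$. The correct key lemma is the sharper exponent statement for $h$-generated ideals, $I^{hq+kq-h+1} \subseteq \bigl(I^{[q]}\bigr)^{k+1}$ (Lemma \ref{inclusion}), applied locally at each associated prime to an $h$-generated minimal reduction $J_P$ of $I_P$ (this is where the analytic spread hypothesis enters): with reduction number $r_P$ one gets $I_P^{(h+k)q - h + 1 + r_P} \subseteq \bigl(J_P^{[q]}\bigr)^{k+1} \subseteq (I^{(k+1)})^{[q]}R_P$, so each factor needs symbolic order only $(h+k)q + O(1)$. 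Then the splitting $q = an + r'$, $f^q = (f^a)^n f^{r'}$, supplies order roughly $(h+k)q$ per factor, and the remaining defect is bounded independently of $q$ --- that bounded defect is what the reduction numbers and the fixed multiplier are actually there to absorb, and this is how Theorem 2.6 of \cite{comparison} is proved (in the radical case the local statement is exactly Lemma \ref{q comparison} of this paper). So: right frame (Frobenius powers, tight closure, local reductions at associated primes), but the wrong key containment, and the uniform-in-$q$ decomposition at the heart of the proof is asserted rather than established.
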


\begin{lemma}[Lemma 2.4 in \cite{comparison}]\label{inclusion}
Let $R$ be a ring, and $I = \left( u_1, \ldots, u_h \right)$ an ideal in $R$. Then for all integers $t \geqslant 1$ and $k \geqslant 0$,
$$I^{ht+kt-h+1} \subseteq \left( u_1^t, \ldots, u_h^t \right)^{k+1}.$$
Hence, if $R$ has prime characteristic $p>0$ and $q=p^e$, 
$$I^{hq+kq-h+1} \subseteq \left( I^{[q]} \right)^{k+1} = \left( I^{k+1} \right)^{[q]}.$$
\end{lemma}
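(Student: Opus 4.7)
The plan is to reduce the first containment to a purely combinatorial statement about monomials in the generators $u_1,\ldots,u_h$, and then apply the division algorithm. Since $I^{ht+kt-h+1}$ is generated as an ideal by the monomials $u_1^{a_1}\cdots u_h^{a_h}$ with $a_1+\cdots+a_h = ht+kt-h+1$, it is enough to show each such monomial lies in $(u_1^t,\ldots,u_h^t)^{k+1}$.

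Next I would write $a_i = q_i t + r_i$ with $0 \leqslant r_i \leqslant t-1$ via the division algorithm, so that
$$u_1^{a_1}\cdots u_h^{a_h} \;=\; \prod_{i=1}^h (u_i^t)^{q_i} \cdot \prod_{i=1}^h u_i^{r_i}.$$
The first product sits inside $(u_1^t,\ldots,u_h^t)^{q_1+\cdots+q_h}$, so the entire first statement reduces to verifying that $q_1+\cdots+q_h \geqslant k+1$.

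The key estimate is the uniform bound $\sum_i r_i \leqslant h(t-1)$, which gives
$$t\sum_{i=1}^h q_i \;=\; \sum_{i=1}^h a_i \;-\; \sum_{i=1}^h r_i \;\geqslant\; (ht+kt-h+1) - h(t-1) \;=\; kt+1,$$
and since $\sum_i q_i$ is an integer this forces $\sum_i q_i \geqslant k+1$. The extra ``$+1$'' appearing in $kt+1$ on the right is precisely what accounts for the $-h+1$ shift in the exponent in the statement, and is the only place where the arithmetic is tight.

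For the second claim I would specialize the first to $t = q = p^e$: then the $u_i^q$ generate $I^{[q]}$, and the first containment reads $I^{hq+kq-h+1} \subseteq (I^{[q]})^{k+1}$. The identity $(I^{[q]})^{k+1} = (I^{k+1})^{[q]}$ is a standard fact, since the Frobenius operation commutes with products of ideals (as it is induced by a ring homomorphism). I do not expect any substantive obstacle: the whole argument is the division-algorithm bookkeeping above, and the only slightly delicate point is the integer-rounding step at the end of the estimate.
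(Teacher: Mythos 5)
Your argument is correct, and it is essentially the same proof as the one in the cited source \cite{comparison}: the standard pigeonhole/division-algorithm count on monomials in the $u_i$, with the bound $\sum_i r_i \leqslant h(t-1)$ forcing $\sum_i q_i \geqslant k+1$, followed by the elementary identity $\left( I^{[q]} \right)^{k+1} = \left( I^{k+1} \right)^{[q]}$. No gaps to report.
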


\begin{lemma}\label{q comparison}
Let $I$ be a radical ideal in a regular ring $R$ of characteristic $p>0$ and $h$ the largest height of a minimal prime of $I$. For all $q=p^e$, 
$$I^{(hq+kq-h+1)} \subseteq \left( I^{(k+1)} \right)^{[q]}.$$
\end{lemma}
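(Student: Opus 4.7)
The plan is to reduce the statement to a local calculation at each minimal prime of $I$ and then apply Lemma~\ref{inclusion}. Since $I$ is radical, its associated primes coincide with its minimal primes, and one has the standard description
\[
I^{(n)} = \bigcap_{P} I^n R_P \cap R,
\]
with $P$ ranging over the minimal primes of $I$. Thus an element lies in $I^{(hq+kq-h+1)}$ precisely when its image lies in $I^{hq+kq-h+1} R_P$ for every such $P$, and it will be enough to prove
\[
I^{hq+kq-h+1} R_P \subseteq (I^{(k+1)})^{[q]} R_P
\]
one minimal prime at a time.

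Fix a minimal prime $P$ of $I$ and let $h_P$ be its height, so $h_P \leqslant h$. Because $R_P$ is a regular local ring and $IR_P = PR_P$ is its maximal ideal, $IR_P$ is minimally generated by exactly $h_P$ elements. Applying Lemma~\ref{inclusion} inside $R_P$ with parameter $h_P$ and $t = q$ yields
\[
(IR_P)^{h_P q + kq - h_P + 1} \subseteq \bigl((IR_P)^{[q]}\bigr)^{k+1}.
\]
The inequality $h_P(q-1) \leqslant h(q-1)$ gives $hq+kq-h+1 \geqslant h_P q + kq - h_P + 1$, so the same containment persists with the larger exponent on the left. Since $IR_P$ is a maximal ideal, symbolic and ordinary powers of $IR_P$ agree, so $I^{(k+1)} R_P = (IR_P)^{k+1}$ and therefore $(I^{(k+1)})^{[q]} R_P = \bigl((IR_P)^{[q]}\bigr)^{k+1}$. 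This delivers the required local containment.

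To assemble the local containments into the global one, I will use that $(I^{(k+1)})^{[q]}$ is itself the intersection of its localizations at the minimal primes of $I$. Because $R$ is regular, the Frobenius is flat, so forming the $q$-th Frobenius power preserves associated primes: $\Ass(R/(I^{(k+1)})^{[q]}) = \Ass(R/I^{(k+1)})$, which equals the set of minimal primes of $I$ because $I$ is radical. The main care-point I anticipate is this last step---checking that $(I^{(k+1)})^{[q]}$ has no embedded primes, so that the symbolic-power description applies to it over the same set of primes used for $I^{(hq+kq-h+1)}$. Once this is in hand, the local containments glue into the desired inclusion and Lemma~\ref{inclusion} does the rest.
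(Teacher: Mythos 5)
Your proposal is correct and follows essentially the same route as the paper: reduce to the minimal primes of $I$ by observing that $(I^{(k+1)})^{[q]}$ has no embedded primes (Frobenius powers preserve associated primes because Frobenius is flat on a regular ring), then apply Lemma~\ref{inclusion} in $R_P$, where $IR_P$ is generated by at most $h$ elements. The only difference is cosmetic: you run Lemma~\ref{inclusion} with the local number of generators $h_P$ and then compare exponents, while the paper pads to $h$ generators and applies the lemma directly, and the paper spells out the associated-primes preservation via minimal free resolutions and Auslander--Buchsbaum, which you cite rather than prove.
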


\begin{proof}
Taking Frobenius powers preserves the set of associated primes. Indeed, since applying Frobenius is exact, the length of a minimal free resolution of an ideal $J$ coincides with that of its Frobenius powers. Given this, the Auslander-Buchsbaum formula then guarantees that the depth of $R_P/J_P$ coincides with that of $R_P/J_P^{[q]}$ for all $q=p^e$. Moreover, $P$ is an associated prime of an ideal $J$ if and only if $\depth \left( R_P/J_P \right) = 0$. 

Since $I^{(k+1)}$ has no embedded primes, we only need to check the containment at the minimal primes of $I$. So take a minimal prime $P$ of $I$ and replace $R$ by $R_P$ and $I$ by $I_P$. Since $I$ is generated by $h$ or fewer elements, Lemma \ref{inclusion} completes the proof.
\end{proof}

\begin{remark}
	Note that Lemma \ref{q comparison} implies Theorem \ref{qout}, since
	$$hqn \geqslant hq+(n-1)q-h+1$$
	as long as $n, h \geqslant 2$. However, we will use the statement of Theorem \ref{qout} precisely as stated, so we record it here as a means to simplify our proofs.
\end{remark}

\begin{remark}
In all our results, we will assume that $I$ is a radical ideal. In that case, the largest height $h$ of a minimal prime of $I$ coincides with the values of $h$ in Theorem \ref{qout} and Theorem \ref{hn+kn}.
\end{remark}

\section{Ideals defining F-pure rings}

In the next two sections, we prove our main results. We will repeatedly use the following remark:

\begin{remark}
	When $R$ is regular, the Frobenius map is flat, by \cite{Kunz}. As a consequence, $(A:B)^{[q]} = \left( A^{[q]} : B^{[q]} \right)$.
\end{remark}

\begin{lemma}\label{lemma}
Let $R$ be a regular ring of characteristic $p>0$. Let $I$ be a radical ideal in $R$ and $h$ the maximal height of a minimal prime of $I$. For all $n \geqslant 1$ and for all $q = p^e \gg 0$,
$$\left( I^{[q]} : I \right) \subseteq \left( I^{n} : I^{(hn-h+1)} \right)^{[q]}.$$
\end{lemma}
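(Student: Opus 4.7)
By the Frobenius-flatness identity $(A:B)^{[q]}=(A^{[q]}:B^{[q]})$ from the remark immediately above, the inclusion to prove is equivalent to
$$(I^{[q]}:I)\cdot (I^{(hn-h+1)})^{[q]} \subseteq (I^n)^{[q]}.$$
So one takes $f \in (I^{[q]}:I)$ and $a \in I^{(hn-h+1)}$, and aims to show $fa^q \in (I^n)^{[q]}$. Since $a \in I^{(hn-h+1)} \subseteq I$, the defining property of $f$ gives $fa \in I^{[q]}$, and the key factorization is
$$fa^q = (fa)\cdot a^{q-1}.$$
The task is now reduced to placing $a^{q-1}$ inside $(I^{n-1})^{[q]}=(I^{[q]})^{n-1}$, since then $fa^q \in I^{[q]}\cdot (I^{n-1})^{[q]} = (I^n)^{[q]}$.

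To bound $a^{q-1}$, I would combine the symbolic-product estimate $a^{q-1} \in (I^{(hn-h+1)})^{q-1} \subseteq I^{((hn-h+1)(q-1))}$ with Lemma~\ref{q comparison} applied at $k=n-2$, which gives $I^{((h+n-2)q-h+1)} \subseteq (I^{(n-1)})^{[q]}$. A direct exponent check shows that $(hn-h+1)(q-1)\geqslant (h+n-2)q-h+1$ for $q \gg 0$ (the difference is $q\bigl((h-1)(n-1)+2\bigr) - (hn-2h+2)$, which is positive for $q$ sufficiently large), so $a^{q-1} \in (I^{(n-1)})^{[q]}$. Assembling,
$$fa^q \in I^{[q]}\cdot (I^{(n-1)})^{[q]} = \bigl(I\cdot I^{(n-1)}\bigr)^{[q]} \subseteq (I^{(n)})^{[q]}.$$

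The main obstacle is closing the remaining gap between $(I^{(n)})^{[q]}$ and $(I^n)^{[q]}$, i.e.\ promoting a symbolic-Frobenius power to an ordinary-Frobenius one. I expect this to be handled either by iterating the factorization, peeling off additional copies of $fa\in I^{[q]}$ to replace each symbolic factor by an ordinary one, or by verifying the containment at each associated prime of $(I^n)^{[q]}$: by flatness of Frobenius $\mathrm{Ass}((I^n)^{[q]}) = \mathrm{Ass}(I^n)$, and at a minimal prime $P$ of $I$ one has $(I^{(hn-h+1)})_P = I_P^{hn-h+1}\subseteq I_P^n$, so the characteristic-$p$ identity $(x+y)^q = x^q+y^q$ immediately yields $(I^{(hn-h+1)})^{[q]}_P \subseteq (I^n)^{[q]}_P$ without using $f$ at all; the genuinely delicate point is at embedded primes of $I^n$, all of which contain $I$, where the hypothesis $q \gg 0$ should make $f$ deep enough in an appropriate $I$-power to absorb the defect coming from those embedded components.
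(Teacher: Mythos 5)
Your opening reduction is exactly the paper's: take $s\in (I^{[q]}:I)$, write $s\,a^q=(sa)a^{q-1}$ with $sa\in I^{[q]}$, and reduce to showing $\bigl(I^{(hn-h+1)}\bigr)^{q-1}\subseteq \bigl(I^{n-1}\bigr)^{[q]}$, with the \emph{ordinary} power $I^{n-1}$ inside the Frobenius bracket. But your execution of that key containment does not deliver it. Applying Lemma~\ref{q comparison} at $k=n-2$ only gives $I^{((h+n-2)q-h+1)}\subseteq \bigl(I^{(n-1)}\bigr)^{[q]}$, i.e.\ the \emph{symbolic} power $I^{(n-1)}$ inside the bracket, so your chain ends at $s\,a^q\in \bigl(I\cdot I^{(n-1)}\bigr)^{[q]}\subseteq \bigl(I^{(n)}\bigr)^{[q]}$, which is strictly weaker than the target $\bigl(I^{n}\bigr)^{[q]}$. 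That symbolic-versus-ordinary gap is precisely the content of the lemma, and neither of your suggested repairs closes it: you only have a single multiplier $s$, so there are no ``additional copies of $sa$'' to peel off; and checking at associated primes of $(I^n)^{[q]}$ fails exactly at the embedded primes of $I^n$ (which contain $I$), where there is no reason a fixed $s\in (I^{[q]}:I)$ lies deep in powers of $I$ -- the hypothesis $q\gg 0$ gives no such depth, and if it did, the lemma would essentially be proving itself. (There is also a small arithmetic slip: the coefficient of $q$ in your difference is $(h-1)(n-2)+1$, not $(h-1)(n-1)+2$, though this does not affect the $q\gg 0$ conclusion for $n\geqslant 2$.)

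The missing idea, which is how the paper proceeds, is to aim not at $\bigl(I^{(n-1)}\bigr)^{[q]}$ but at an ordinary $(n-1)$st power of a large symbolic power: apply Theorem~\ref{hn+kn} with $k+1=hq$ to get
$$I^{(h(n-1)+(hq-1)(n-1))}\subseteq \bigl(I^{(hq)}\bigr)^{n-1},$$
and then use Theorem~\ref{qout} in the form $I^{(hq)}\subseteq I^{[q]}$ to conclude $\bigl(I^{(hn-h+1)}\bigr)^{q-1}\subseteq \bigl(I^{[q]}\bigr)^{n-1}=\bigl(I^{n-1}\bigr)^{[q]}$. The exponent comparison needed is $(hn-h+1)(q-1)\geqslant (hq-1)(n-1)+h(n-1)$, which holds once $q\geqslant (2h-1)(n-1)+1$; this is where the hypothesis $q\gg 0$ is actually used. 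With that substitution your argument becomes the paper's proof; as written, it stops one crucial step short.
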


\begin{proof}
Take $s \in \left( I^{[q]} : I \right)$. Then $s I^{(hn-h+1)} \subseteq sI \subseteq I^{[q]}$, so
$$s \left( I^{(hn-h+1)} \right)^{[q]} \subseteq \left( s I^{(hn-h+1)} \right) \left( I^{(hn-h+1)} \right)^{q-1} \subseteq I^{[q]} \left( I^{(hn-h+1)} \right)^{q-1}.$$
We will show that
$$\left( I^{(hn-h+1)} \right)^{q-1} \subseteq \left( I^{n-1} \right)^{[q]},$$
which implies that
$$s \left( I^{(hn-h+1)} \right)^{[q]} \subseteq\left( I^{n} \right)^{[q]},$$
completing the proof.

Notice that

$$\left( I^{(hn-h+1)} \right)^{q-1} \subseteq I^{((hn-h+1)(q-1))}.$$
We claim that $\left( I^{(hn-h+1)} \right)^{q-1} \subseteq I^{((hq-1)(n-1)+h(n-1))}$, and to show that, it is enough to prove that 
$$(hn-h+1)(q-1) \geqslant (hq-1)(n-1)+h(n-1).$$
This holds if $q \geqslant (2h-1)(n-1)+1$.

By Theorem \ref{hn+kn},
$$I^{(h(n-1)+(hq-1)(n-1))} \subseteq \left( I^{(hq)} \right)^{n-1}.$$ 

By Theorem \ref{qout}, $I^{(hq)} \subseteq I^{[q]}$. Then
$$\left( I^{(hn-h+1)} \right)^{q-1} \subseteq \left( I^{[q]} \right)^{n-1}.$$
The result now follows.
\end{proof}

\begin{theorem}\label{Fpure}
Let $R$ be a regular ring of characteristic $p>0$. Let $I$ be an ideal in $R$ with $R/I$ F-pure, and let $h$ be the maximal height of a minimal prime of $I$. Then for all $n \geqslant 1$, $I^{(hn-h+1)} \subseteq I^n$.
\end{theorem}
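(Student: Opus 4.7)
The plan is to reduce the desired containment to a colon-ideal calculation and then invoke Fedder's Criterion locally. Set $J := (I^n : I^{(hn-h+1)}) \subseteq R$. Proving the theorem is equivalent to showing $1 \in J$, i.e.\ $J = R$, so I argue by contradiction and assume $J$ is contained in some maximal ideal $\mathfrak{m}$ of $R$.

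The first observation is that any such $\mathfrak{m}$ must contain $I$: otherwise $I_\mathfrak{m} = R_\mathfrak{m}$, forcing $J_\mathfrak{m} = R_\mathfrak{m}$ and contradicting $J \subseteq \mathfrak{m}$. Localizing at $\mathfrak{m}$, I land in the regular local ring $(R_\mathfrak{m}, \mathfrak{m} R_\mathfrak{m})$, and the hypothesis that $R/I$ is F-pure passes to $R_\mathfrak{m}/IR_\mathfrak{m}$. Symbolic powers, Frobenius powers, and colon ideals all commute with localization, so the conclusion of Lemma \ref{lemma} survives the localization: for all $q = p^e \gg 0$,
$$(I^{[q]} : I)R_\mathfrak{m} \; \subseteq \; J^{[q]} R_\mathfrak{m}.$$

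Now fix $q = p^e$ large enough that both Lemma \ref{lemma} and Fedder's Criterion (Theorem \ref{fedder}) are in force. By Fedder, there exists $s \in (I^{[q]} : I)$ with $s \notin \mathfrak{m}^{[q]} R_\mathfrak{m}$. By the displayed inclusion, $s \in J^{[q]} R_\mathfrak{m}$. But the assumption $J \subseteq \mathfrak{m}$ immediately gives $J^{[q]} \subseteq \mathfrak{m}^{[q]}$, whence $s \in \mathfrak{m}^{[q]} R_\mathfrak{m}$, a contradiction. Therefore $J = R$ and $I^{(hn-h+1)} \subseteq I^n$, as desired.

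There is no serious obstacle: the proof is really just a packaging of Lemma \ref{lemma} together with Fedder's Criterion, and the only thing to be careful about is that the same $q$ can be chosen to satisfy both the ``$q \gg 0$'' required in Lemma \ref{lemma} and the ``$q \gg 0$'' required by Fedder in the localization. This is immediate because both conditions are eventually satisfied as $q \to \infty$. The conceptual content was already absorbed into Lemma \ref{lemma}: once one knows that Frobenius witnesses of F-purity are transported into the colon ideal $J^{[q]}$, Fedder's non-containment in $\mathfrak{m}^{[q]}$ forces $J$ itself to escape every maximal ideal, forcing $J = R$.
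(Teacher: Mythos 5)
Your proof is correct and follows essentially the same route as the paper: combine Lemma \ref{lemma} with Fedder's Criterion after passing to a regular local ring, using that F-purity localizes. The only cosmetic difference is that you localize at a maximal ideal containing the colon ideal $(I^n : I^{(hn-h+1)})$ (and rightly check it contains $I$), whereas the paper localizes at an associated prime of $I^n$; the substance is identical.
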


\begin{proof}
First, note that we can reduce to the local case: in general, to prove an inclusion of ideals $K \subseteq L$ it suffices to prove the containment holds after localizing at an arbitrary associated prime of $L$; moreover, all localizations of an F-pure ring are F-pure \cite[6.2]{HoRo}. So suppose that $(R, \mm)$ is a regular local ring, and that $R/I$ is F-pure.

Fix $n \geqslant 1$, and consider $q$ as in Lemma \ref{lemma}. Then 
$$\left( I^{[q]} : I \right) \subseteq \left( I^{n} : I^{(hn-h+1)} \right)^{[q]}.$$
If $I^{(hn-h+1)} \nsubseteq I^n$, then $\left( I^{n} : I^{(hn-h+1)} \right)^{[q]} \subseteq \mm^{[q]}$, contradicting Fedder's Criterion (Theorem \ref{fedder}).
\end{proof}

\begin{example}[Squarefree monomial ideals]
Let $R$ be a polynomial ring over a field of characteristic $p>0$, and $I$ an ideal generated by squarefree monomials; then $R/I$ is F-pure \cite[Example 2.2]{AnuragUliPure}. Theorem \ref{Fpure} recovers the result that monomial ideals verify Conjecture \ref{Harbourne} in the squarefree case \cite[Example 8.4.5]{Seshadri}. For more on symbolic powers of monomial ideals, see \cite{SymbMon}.
\end{example}

The following example shows that Theorem \ref{Fpure} is sharp for both the case in which $R/I$ is F-pure and where $I$ is a monomial ideal. This example is a special case of the star configurations of points in \cite{HaHu}.

\begin{example}
Let $I \subseteq R := k[x_1, \ldots, x_v]$ be the following ideal:
$$I = \bigcap_{i \neq j} \left( x_i, x_j \right) = \left( x_1 \ldots \hat{x_i} \ldots x_v: 1 \leqslant i \leqslant v \right).$$
Note that the symbolic powers of $I$ can be written as
$$I^{(m)} = \bigcap_{i \neq j} \left( x_i, x_j \right)^m.$$
A monomial $x_1^{a_1} \cdots x_v^{a_v}$ is in $\left( x_i,x_j \right)^m$ if and only if $a_i + a_j \geqslant m$; thus $x_1^{a_1} \cdots x_v^{a_v} \in I^{(m)}$ if and only if $a_i + a_j \geqslant m$ for all $i \neq j$. In particular, $x_1^{n-1} \cdots x_v^{n-1} \in I^{(2n-2)}$. On the other hand, elements of $I$ have degree at least $v-1$, so that elements of $I^n$ have degree at least $(v-1)n=vn-n$. If $n<v$, then $vn-v<vn-n$, and since the degree of $x_1^{n-1} \cdots x_v^{n-1}$ is $v(n-1)=vn-v$, $x_1^{n-1} \cdots x_v^{n-1} \notin I^{n}$. 

Notice that all the associated primes of $I$ have height $2$, so $h=2$. Theorem \ref{Fpure} says that $I^{(2n-1)} \subseteq I^n$ for all $n$, but the previous argument shows that $I^{(2n-2)} \nsubseteq I^n$ for all $n < v$.

This example can be generalized to any $h \geqslant 2$, by taking  $I =\bigcap_{i_1 < \cdots < i_h} \left( x_{i_1}, \ldots, x_{i_h} \right)$. In that case, $I^{(hn-h)} \nsubseteq I^n$ for all $n(h-1)<v$.
\end{example}

\section{Ideals defining strongly F-regular rings}

The main result of this section is the following:

\begin{theorem}\label{strongly thm}
Let $R$ be an F-finite regular ring of characteristic $p>0$, and $I$ an ideal of height $h \geqslant 2$ such that $R/I$ is strongly F-regular. Then $I^{(d)} \subseteq I I^{(d+1-h)}$ for all $d \geqslant h-1$. In particular, 
$$I^{((h-1)n+1)} \subseteq I^{n+1}$$ 
for all $n \geqslant 1$.
\end{theorem}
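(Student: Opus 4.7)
My plan is to adapt the proof of Theorem \ref{Fpure} by replacing Fedder's criterion with Glassbrenner's. First I would reduce to an F-finite regular local ring $(R,\mm)$ with $R/I$ strongly F-regular; this is legitimate because strong F-regularity localizes, and the desired containment can be checked locally at associated primes of $I\cdot I^{(d-h+1)}$. The case $d=h-1$ is immediate from $I^{(h-1)}\subseteq I=I\cdot I^{(0)}$, so the work lies in the range $d\geqslant h$.

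The heart of the argument is an analog of Lemma \ref{lemma}: exhibit an element $c\in R$ not in any minimal prime of $I$ such that
$$c\cdot(I^{[q]}:I)\subseteq(I\cdot I^{(d-h+1)}:I^{(d)})^{[q]}$$
for all $q=p^e\gg 0$. Given this, if $I^{(d)}\not\subseteq I\cdot I^{(d-h+1)}$ then the colon is a proper ideal, hence contained in $\mm$, and Frobenius flatness upgrades this to $(I\cdot I^{(d-h+1)}:I^{(d)})^{[q]}\subseteq\mm^{[q]}$. Combining, $c\cdot(I^{[q]}:I)\subseteq\mm^{[q]}$ for all $q\gg 0$, directly contradicting Glassbrenner's criterion (Theorem \ref{strongly criterion}).

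To prove the lemma, I follow the bookkeeping of Lemma \ref{lemma}. Given $s\in(I^{[q]}:I)$ and $x\in I^{(d)}$, decompose $csx^q=(sx)(cx^{q-1})$ with $sx\in sI\subseteq I^{[q]}$, so it is enough to arrange $cx^{q-1}\in(I^{(d-h+1)})^{[q]}$. Using $x^{q-1}\in I^{(d(q-1))}$ together with Lemma \ref{q comparison} (which supplies $I^{(dq-h+1)}\subseteq(I^{(d-h+1)})^{[q]}$), the problem reduces to verifying $c\cdot I^{(d(q-1))}\subseteq I^{(dq-h+1)}$. This holds whenever $c\in I^{(d-h+1)}$.

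The main obstacle lies precisely in this last step: any $c\in I^{(d-h+1)}$ lies in $I$, and therefore in every minimal prime of $I$, which disqualifies it from use in Glassbrenner's criterion. Escaping this deadlock is the decisive step of the proof, and I anticipate it requires using strong F-regularity more essentially than simply plugging into the analog of Lemma \ref{lemma}---for instance by introducing an auxiliary element outside the minimal primes of $I$ that still supplies the necessary symbolic-power shift, or by splitting the role of $c$ between a factor in $I^{(d-h+1)}$ and a test element of $R/I$ witnessing strong F-regularity. With the main containment in hand, the corollary $I^{(n(h-1)+1)}\subseteq I^{n+1}$ follows by iterating $n$ times: $I^{(n(h-1)+1)}\subseteq I\cdot I^{((n-1)(h-1)+1)}\subseteq I^2\cdot I^{((n-2)(h-1)+1)}\subseteq\cdots\subseteq I^n\cdot I^{(1)}=I^{n+1}$.
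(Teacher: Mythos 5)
Your overall framework is the right one (reduce to the local case, prove an analog of Lemma \ref{lemma} with an auxiliary multiplier $c$ outside all minimal primes of $I$, and contradict Glassbrenner's criterion), and your deduction of the second containment from the first by iteration is correct. But the proposal stops exactly at the decisive point: you never produce the element $c$, and you correctly observe that your own bookkeeping forces $c\in I^{(d-h+1)}\subseteq I$, which is unusable in Glassbrenner's criterion. So as written there is a genuine gap, and it is the heart of the theorem.

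The missing idea in the paper is to take $c=t\in\bigl(I^{d}:I^{(d)}\bigr)$. Since $I^{(d)}_P=I^{d}_P$ at every minimal prime $P$ of $I$, this colon ideal localizes to the unit ideal at each such $P$, so by prime avoidance one can choose $t$ outside every minimal prime of $I$ --- this is where strong F-regularity is ``used more essentially,'' via Glassbrenner's freedom in the choice of $c$. The computation is then arranged differently from yours: instead of using $s$ against a copy of $x\in I^{(d)}$ and asking $c$ to supply a symbolic shift at the end, one first uses $t$ to convert a symbolic factor into an ordinary one,
$$st\bigl(I^{(d)}\bigr)^{[q]}\subseteq s\bigl(tI^{(d)}\bigr)\bigl(I^{(d)}\bigr)^{q-1}\subseteq sI^{d}\bigl(I^{(d)}\bigr)^{q-1}\subseteq I^{[q]}\,I^{d-1}\bigl(I^{(d)}\bigr)^{q-1},$$
so one factor of $I$ from $I^{d}$ feeds $s$ to produce $I^{[q]}$, while the leftover $I^{d-1}$ boosts the symbolic degree: $I^{d-1}\bigl(I^{(d)}\bigr)^{q-1}\subseteq I^{(dq-1)}$, and since $h\geqslant 2$ one has $dq-1\geqslant dq-h+1$, so Lemma \ref{q comparison} gives $I^{(dq-1)}\subseteq\bigl(I^{(d+1-h)}\bigr)^{[q]}$. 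This proves the clean lemma $\bigl(I^{d}:I^{(d)}\bigr)\bigl(I^{[q]}:I\bigr)\subseteq\bigl(II^{(d+1-h)}:I^{(d)}\bigr)^{[q]}$, and then the contradiction with Glassbrenner runs exactly as you outlined. In short, your shortfall of $d-h+1$ in symbolic degree is closed not by choosing $c$ deep inside $I$, but by choosing $c$ in $\bigl(I^{d}:I^{(d)}\bigr)$, whose very definition trades $I^{(d)}$ for $I^{d}$ at the start of the estimate.
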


First, we prove a lemma:

\begin{lemma}\label{strongly lemma}
Let $R$ be a regular ring of characteristic $p>0$, $I$ an ideal in $R$, and $h \geqslant 2$ the maximal height of a minimal prime of $I$. Then for all $d \geqslant h-1$ and for all $q = p^e$,
$$\left( I^{d} : I^{(d)} \right) \left( I^{[q]} : I \right) \subseteq \left(I I^{(d+1-h)} : I^{(d)} \right)^{[q]}.$$
\end{lemma}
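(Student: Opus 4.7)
The plan is to follow the template of Lemma \ref{lemma}. Since $R$ is regular, Frobenius is flat and colons commute with Frobenius powers, so the desired containment is equivalent to
$$u s \cdot (I^{(d)})^{[q]} \subseteq (I \cdot I^{(d+1-h)})^{[q]} = I^{[q]} \cdot (I^{(d+1-h)})^{[q]}$$
for every $u \in (I^d : I^{(d)})$ and $s \in (I^{[q]} : I)$. Because $(I^{(d)})^{[q]}$ is generated by the $q$-th powers of a generating set of $I^{(d)}$, it suffices to fix $f \in I^{(d)}$ and show that $u s f^q$ lies in the right-hand side.

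The key step is to split $f^q = f \cdot f^{q-1}$ and apply each hypothesis to one factor. By assumption $u f \in I^d$; writing $I^d = I \cdot I^{d-1}$ and using $sI \subseteq I^{[q]}$ gives $s(uf) \in I^{[q]} \cdot I^{d-1}$. Meanwhile $f^{q-1} \in (I^{(d)})^{q-1} \subseteq I^{(d(q-1))}$, so combining with $I^{d-1} \subseteq I^{(d-1)}$ and the standard inclusion $I^{(a)} I^{(b)} \subseteq I^{(a+b)}$ yields
$$u s f^q = s(uf) \cdot f^{q-1} \in I^{[q]} \cdot I^{(d-1)} \cdot I^{(d(q-1))} \subseteq I^{[q]} \cdot I^{(dq-1)}.$$

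Now the hypothesis $h \geqslant 2$ enters: it grants $dq - 1 \geqslant dq - h + 1$, hence $I^{(dq-1)} \subseteq I^{(dq-h+1)}$, and for $d \geqslant h$ Lemma \ref{q comparison} applied with $k = d - h \geqslant 0$ gives $I^{(dq-h+1)} \subseteq (I^{(d+1-h)})^{[q]}$. Thus $u s f^q \in I^{[q]} \cdot (I^{(d+1-h)})^{[q]} = (I \cdot I^{(d+1-h)})^{[q]}$, as required. The degenerate case $d = h - 1$ is trivial since then $I^{(d+1-h)} = R$ and $I^{(h-1)} \subseteq I^{(1)} = I$, so the colon on the right-hand side equals $R$. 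The main obstacle I anticipate is recognizing the right way to deploy both hypotheses simultaneously; the splitting $f^q = f \cdot f^{q-1}$ is the crux, letting the single factor $f$ absorb one application of $u$ (into $I^d$) and one application of $s$ (into $I^{[q]}$), while the bulk $f^{q-1}$ is handled purely by symbolic-power arithmetic. The slack afforded by $h \geqslant 2$ is exactly what enables the final transition from $I^{(dq-1)}$ to $I^{(dq-h+1)}$ and thus to the Frobenius-power form via Lemma \ref{q comparison}.
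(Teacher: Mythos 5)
Your argument is correct and is essentially the paper's own proof: the same splitting of $(I^{(d)})^{[q]}$ into one copy of $I^{(d)}$ (absorbed by $u$ into $I^d$, with one factor of $I$ then absorbed by $s$ into $I^{[q]}$) times $(I^{(d)})^{q-1}$, followed by the symbolic-power estimate $I^{d-1}(I^{(d)})^{q-1} \subseteq I^{(dq-1)} \subseteq I^{(dq-h+1)}$ and Lemma \ref{q comparison}. Working with a single generator $f^q$ rather than at the ideal level, and treating $d=h-1$ separately, are only cosmetic differences (the paper's argument covers $d=h-1$ uniformly since the target $(I^{(0)})^{[q]}=R$ makes the last containment trivial).
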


\begin{proof}
Let $t \in \left( I^{d} : I^{(d)} \right)$ and $s \in \left( I^{[q]} : I \right)$. We want to show that
$$st \left(  I^{(d)} \right)^{[q]} \subseteq \left( I I^{(d+1-h)} \right)^{[q]}.$$
First, note that
$$st \left(  I^{(d)} \right)^{[q]} \subseteq s \left( t I^{(d)} \right) \left( I^{(d)} \right)^{q-1} \subseteq s I^{d} \left( I^{(d)} \right)^{q-1} \subseteq I^{[q]} I^{d-1} \left( I^{(d)} \right)^{q-1}.$$
Now since $d-1+d(q-1) = dq-1$, we have $I^{d-1} \left( I^{(d)} \right)^{q-1} \subseteq I^{(qd-1)}$. By Lemma \ref{q comparison}, we get 
$$I^{(qd-h+1)} \subseteq \left( I^{(d+1-h)} \right)^{[q]}.$$
As long as $h \geqslant 2$, we have $qd-1 \geqslant qd-h+1$, which implies that $I^{(qd-1)} \subseteq \left( I^{(d+1-h)} \right)^{[q]}$. Then
$$st \left(  I^{(d)} \right)^{[q]} \subseteq I^{[q]} I^{d-1} \left( I^{(d)} \right)^{q-1} \subseteq \left( I I^{(d+1-h)} \right)^{[q]},$$
as desired.
\end{proof}

We begin the proof of Theorem \ref{strongly thm}. We first note that the second statement follows from the first, by induction. To see this, assume we have shown that $I^{(d)} \subseteq I I^{(d+1-h)}$ for all $d \geqslant h-1$. When $d=h$, this means that $I^{(h)} \subseteq I I^{(1)} \subseteq I^2$, which is the statement we are trying to show for the case $n=1$. The induction step follows from choosing $d=(h-1)(n+1)+1$.
  
We prove the first statement by contradiction. As before, we can reduce to the case where $(R, \mm)$ is a regular local ring. Note that strong F-regularity is a local property, that is, $R$ is strongly F-regular if and only if all of its localizations are strongly F-regular \cite[3.1 (a)]{HHStrong}. Suppose that $\left(I I^{(d+1-h)} : I^{(d)} \right) \subseteq \mm$. We can always find an element $t \in \left( I^{d} : I^{(d)} \right)$ not in any minimal prime of $I$. By Lemma \ref{strongly lemma}, 
$$t \left( I^{[q]} : I \right) \subseteq \left(I I^{(d+1-h)} : I^{(d)} \right)^{[q]} \subseteq \mm^{[q]}.$$
By Theorem \ref{strongly criterion}, this contradicts the fact that $R/I$ is strongly F-regular.

\begin{remark}
	Note that if $R$ is a local ring and $R/I$ is strongly F-regular, then $I$ is a prime ideal, so that the value $h$ in Lemma \ref{q comparison} is the height of $I$.
\end{remark}

\begin{corollary}\label{containment cor codim 2}
Let $R$ be a regular ring of characteristic $p>0$, and $I$ a height $2$ prime such that $R/I$ is strongly F-regular. Then $I^{(n)} = I^n$ for all $n \geqslant 2$.
\end{corollary}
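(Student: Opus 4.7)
The plan is to apply Theorem \ref{strongly thm} with $h=2$ and then induct on $n$. Since $I$ has height $2$ and $R/I$ is strongly F-regular, Theorem \ref{strongly thm} gives
\[
I^{(d)} \subseteq I \cdot I^{(d-1)} \quad \text{for all } d \geqslant 1.
\]
The reverse containment $I^n \subseteq I^{(n)}$ is automatic, so we only need the forward containment $I^{(n)} \subseteq I^n$.

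For the base case $n=2$, take $d=2$: this gives $I^{(2)} \subseteq I \cdot I^{(1)} = I \cdot I = I^2$, since $I$ is prime (so $I^{(1)} = I$). For the induction step, suppose $I^{(n)} = I^n$ for some $n \geqslant 2$. Applying the containment from Theorem \ref{strongly thm} with $d = n+1$ gives
\[
I^{(n+1)} \subseteq I \cdot I^{(n)} = I \cdot I^n = I^{n+1},
\]
where we used the induction hypothesis. This closes the induction.

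There is essentially no obstacle — the corollary is an immediate consequence of Theorem \ref{strongly thm} once one observes that with $h=2$ the inductive step collapses exactly onto the next power, and that $I^{(1)} = I$ because $I$ is prime. The only thing worth flagging is that Theorem \ref{strongly thm} is stated with the hypothesis that $R$ is F-finite, but reduction to the local (hence F-finite, after completing or invoking the usual reduction) case is already handled inside the proof of that theorem, so the corollary applies as stated.
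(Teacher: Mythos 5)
Your proof is correct and follows the same route the paper intends: the corollary is an immediate consequence of Theorem \ref{strongly thm} with $h=2$ (indeed, the ``in particular'' clause there already gives $I^{(n+1)}\subseteq I^{n+1}$, and the induction you write out is exactly the one carried out inside the proof of that theorem), combined with the trivial containment $I^n\subseteq I^{(n)}$ and $I^{(1)}=I$ for $I$ prime. Your remark about the F-finite hypothesis is also apt and handled as you describe.
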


The following example suggests that Theorem \ref{strongly thm} might not necessarily be sharp:

\begin{example}[Determinantal ideals]\label{determinantal}
Let $K$ be a field of characteristic $0$ or $p>\min{\lbrace t, n-t \rbrace}$, $X$ a generic $n \times n$ matrix, and let $I_t$ denote the ideal of $t$-minors of $X$ in $R=K[X]$. These ideals of minors define strongly F-regular rings in characteristic $p$, by \cite[7.14]{HoHu3}.

For  which values of $k$ and $m$ do we have $I^{(k)}_t \subseteq I^m_{t}$? We claim that this holds as long as $\frac{n}{t(n-t+1)} k \geqslant m$. 

Fix $k$ and consider $m \leqslant \frac{nk}{t(n-t+1)}$. By \cite[10.4]{determinantalrings}, given $s_i$-minors $\delta_i$, we have $\delta_1 \cdots \delta_u \in I^{(k)}_t$ if and only if $\displaystyle\sum_{i=1}^{u} \max \left\lbrace 0, s_i-t+1 \right\rbrace \geqslant k$, and moreover, $I^{(k)}_t$ is generated by such products. Notice that any factors corresponding to minors of size less than $t$ are irrelevant to determine whether or not the given product is in $I^{(k)}_t$.

So consider $\delta_1, \cdots, \delta_u$, with $\delta_i$ an $s_i$-minor for each $i$, such that $\delta_1 \cdots \delta_u \in I^{(k)}_t$, and write $s=s_1+\cdots+s_u$. Using the remark above, we may assume that $s_i \geqslant t$ for each $i$. We want to show that for all such possible choices of $\delta_i$, $\delta_1 \cdots \delta_u \in I^{m}_t$. Since $\delta_1 \cdots \delta_u \in I^{(k)}_t$, then 
$$\displaystyle\sum_{i=1}^{u} \max \left\lbrace 0, s_i-t+1 \right\rbrace \geqslant k$$
which, since we are assuming $s_i \geqslant t$ for all $i$, can be rewritten as
$$(*) \qquad s \geqslant k+u(t-1).$$
Moreover, $s_i$ is the size of a minor of an $n \times n$ matrix, and thus each $s_i \leqslant n$. In particular, this implies that $un \geqslant s$. Therefore, we must have
$$un \geqslant k+u(t-1),$$
so that
$$u \geqslant \frac{k}{n-t+1}.$$
Combining this with $(*)$, we obtain
$$s \geqslant k+\frac{k}{n-t+1}(t-1) = \frac{nk}{n-t+1}.$$

By \cite[7.3]{DEPdeterminantal}, $I_t^{m} = I_t^{(m)} \cap I_{t-1}^{(2m)} \cap \cdots \cap I_1^{(tm)}$. Thus it suffices to show that $\delta_1 \cdots \delta_u \in I_t^{m}$. In particular, for each $1 \leqslant j \leqslant t$, we need to show that $\delta_1 \cdots \delta_u \in I_j^{{((t-j+1)m)}}$, which is equivalent to the following inequalities:
$$\begin{array}{l}
s \geqslant tm \\ 
s\geqslant (t-1)m + u = tm + (u-m)\\
s \geqslant (t-2)m + 2u = tm + 2(u-m)\\
\qquad \vdots \\ 
s \geqslant m + (t-1)u = tm + (t-1)(u-m).
\end{array}$$
Note that all these inequalities are convex combinations of the first and the last one, so that they are verified as long as
$$\begin{array}{l}
s \geqslant tm \\ 
s \geqslant m + (t-1)u.
\end{array}$$

Since we are assuming that $s \geqslant k + (t-1)u$, then $s \geqslant m + (t-1)u$ as long as $k \geqslant m$, which must be satisfied by any $m$ with $I_t^{(k)} \subseteq I_t^m$. It remains to check that $s \geqslant tm$.

We have seen above that $s \geqslant \frac{nk}{n-t+1}$. Given this, $s \geqslant tm$ is satisfied as long as $\frac{nk}{n-t+1} \geqslant tm$, which can be rewritten as $k \geqslant \frac{(n-t+1)t}{n} \, m$, which was our claim.

We conclude that $I^{(k)} \subseteq I^m$ as long as $k \geqslant \frac{t(n-t+1)}{n} \, m$. This bound is much better than that of Theorem \ref{strongly thm} when $n$ is large and $t$ is close to $\frac{n}{2}$. We also note that the computations above can be used to show that, given $k$, this is actually the best possible value of $m$.

Note that we can obtain the same containment results for the ideal of $t \times t$ minors of a symmetric $n \times n$ matrix or the ideal of $2t$-Pfaffians of a generic $n \times n$ matrix, using Proposition 4.3 and Theorem 4.4 in \cite{MultClassicalVar} for the symmetric case and Theorem 2.1 and Theorem 2.4 in \cite{pfaffians} for the Pfaffians.
\end{example}

\begin{example}[Singh]\label{singh}
Let $R=k[a,b,c,d]$, where $k$ is a field of prime characteristic $p>2$, $n$ be an integer, and let $I$ be the ideal of $2 \times 2$ minors of the $2 \times 3$ matrix
$$\left( \begin{array}{ccc} a^2 & b & d \\ c & a^2 & b^n - d \end{array} \right) .$$
By \cite[Proposition 4.3]{AnuragNotDeform}, $R/I$ is strongly F-regular. Since $I$ is an ideal of height $2$, Corollary \ref{containment cor codim 2} says that $I^{(k)} = I^k$ for all $k$. 

An interesting point connected with this example is the following: the ideal $I$ of maximal minors of a generic $2$ by $3$ matrix defines
a strongly F-regular quotient ring. As we specialize the entries by moding out general choices of elements
in some suitable ideal, the quotient ring will remain strongly F-regular for a while, but not forever. Likewise,
the symbolic powers of the same ideal $I$ are just the regular powers, and this will also remain true
as we specialize, again for a while. Is the point at which the strongly F-regular property breaks the
same as that where the equality of symbolic and ordinary powers no longer remain the same? Is there
something interesting to say on this point?

\end{example}

There are other circumstances under which improved bounds hold. The next theorem
gives one such example:

\smallskip

\begin{theorem}
	Let $R$ be a regular ring and $I$ a radical ideal such that $I^{(k)} = I^k$. Let $h$ be the maximal height of a minimal prime of $I$. Then $I^{(a)} \subseteq I^b$ whenever
	$$a  \geqslant \left \lceil{\frac{b}{k}}\right \rceil (h+k-1).$$
	\end{theorem}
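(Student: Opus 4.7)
The plan is to chain Theorem \ref{hn+kn} with the hypothesis $I^{(k)} = I^k$ to turn a symbolic $n$-th power into an ordinary $n$-th power, then absorb the result into $I^b$. I would set $n := \lceil b/k \rceil$, which is the smallest integer with $nk \geq b$; this ensures $I^{nk} \subseteq I^b$ and simultaneously keeps the symbolic exponent $n(h+k-1)$ as small as possible. Then I would apply Theorem \ref{hn+kn} with exponent $n$ and with its free parameter (the one called "$k$" in the statement of that theorem) replaced by $k-1$, which is permissible since $k \geq 1$. This yields
$$I^{(nh + n(k-1))} \ = \ I^{(n(h+k-1))} \ \subseteq \ \bigl(I^{(k)}\bigr)^n.$$

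Now I would invoke the hypothesis $I^{(k)} = I^k$ to rewrite $(I^{(k)})^n = (I^k)^n = I^{nk}$, which is contained in $I^b$ by the choice of $n$. Since $a \geq n(h+k-1)$ gives $I^{(a)} \subseteq I^{(n(h+k-1))}$, concatenating produces
$$I^{(a)} \ \subseteq \ I^{(n(h+k-1))} \ \subseteq \ I^{nk} \ \subseteq \ I^b,$$
which is the desired containment.

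I do not anticipate a genuine obstacle: the proof is pure bookkeeping on exponents, with the substantive content supplied by the preceding Theorem \ref{hn+kn} and the hypothesis used exactly once, to collapse $(I^{(k)})^n$ to $I^{nk}$. The only mild subtlety is the choice of $n$; any smaller $n$ would fail $nk \geq b$, while a larger $n$ would weaken the threshold on $a$. As a sanity check, taking $k=1$ recovers the classical bound $I^{(bh)} \subseteq I^b$, since then the hypothesis $I^{(1)} = I^1$ is automatic for radical ideals and $\lceil b/1 \rceil (h+0) = bh$.
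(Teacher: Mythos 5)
Your proposal is correct and follows exactly the paper's argument: take $n=\lceil b/k\rceil$, apply Theorem \ref{hn+kn} with the free parameter equal to $k-1$ to get $I^{(n(h+k-1))}\subseteq (I^{(k)})^n$, and collapse $(I^{(k)})^n=I^{nk}\subseteq I^b$ using the hypothesis. The only difference is that you spell out the inclusion $I^{(a)}\subseteq I^{(n(h+k-1))}$, which the paper leaves implicit.
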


\begin{proof}
Write $b=kn-i$, where $i=0, \ldots, k-1$. Notice that $n= \left \lceil{\frac{b}{k}}\right \rceil$. By Theorem \ref{hn+kn},
	$$I^{((h+k-1)n)} \subseteq \left( I^{(k)} \right)^n = I^{kn} \subseteq I^{b}.$$
\end{proof}

Given two ideals $I$ and $J$ in $R$, $I$ and $J$ are \emph{linked} if there exists a regular sequence $\underline{x} = x_1, \ldots, x_n$ in $I\cap J$ such that $J=(\underline{x}):I$ and $I=(\underline{x}):J$. The \emph{linkage class} of $J$ is the set of ideals $I$ such that there exist ideals $K_0, \ldots, K_s$ with $K_0=J$ and $K_s=I$ and $K_i$ is liked to $K_{i-1}$. An ideal $I$ is \emph{licci} if it is in the linkage class of a complete intersection. For more on linkage, see \cite{HULinkage}. One can also study the behavior of
various types of F-singularities under linkage. See  \cite{LinkageFsingularities}.

\begin{corollary}\label{licci}
	Let $R$ be a regular ring and $I$ a licci radical ideal of height $h$ such that $R/I$ is Gorenstein. Let $h$ be the maximal height of a minimal prime of $I$. If $a \geqslant \left \lceil{\frac{b}{2}}\right \rceil (h+1)$, then $I^{(a)} \subseteq I^b$.
	\end{corollary}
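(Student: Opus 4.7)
The plan is to invoke the preceding theorem with $k=2$: substituting $k=2$ into the bound $a \geqslant \lceil b/k \rceil (h+k-1)$ gives exactly $a \geqslant \lceil b/2 \rceil (h+1)$, matching the hypothesis of the corollary. The entire content of the corollary therefore reduces to establishing the equality of symbolic and ordinary squares, $I^{(2)} = I^2$, for a radical licci ideal $I$ with $R/I$ Gorenstein.

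To prove $I^{(2)} = I^2$ in this setting, I would combine two classical facts from the linkage literature. First, Huneke's theorem (cited in \cite{HULinkage}) gives that every licci ideal is strongly Cohen--Macaulay; that is, the Koszul homology on any minimal generating set of $I$ is Cohen--Macaulay as an $R/I$-module. Second, since $R/I$ is Gorenstein, $I$ is unmixed, and since $I$ is radical, at every minimal prime $P$ of $I$ we have $I_P = PR_P$, which is a complete intersection because $R_P$ is regular; hence $I$ is generically a complete intersection. Under the combination ``strongly Cohen--Macaulay plus generically a complete intersection,'' a standard result going back to Huneke and Herzog--Simis--Vasconcelos yields that the associated graded ring $\mathrm{gr}_I(R)$ is Cohen--Macaulay. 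This forces $R/I^n$ to have no embedded primes for every $n \geqslant 1$, and combined with generic complete intersection this implies $I^{(n)} = I^n$ for all $n$; specializing to $n=2$ completes the reduction.

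The main obstacle will be verifying the precise technical conditions (typically phrased as some local generator-count condition $G_s$) under which strongly Cohen--Macaulay plus generic complete intersection implies that $\mathrm{gr}_I(R)$ is Cohen--Macaulay. In the Gorenstein case these conditions are usually forced by the symmetric minimal free resolution, but the appropriate reference must be checked carefully. If the cleanest formulation of the implication requires extra hypotheses that are not automatic here, a possible fallback is to prove $I^{(2)} = I^2$ directly, exploiting the self-duality of the minimal free resolution of a Gorenstein ideal together with an explicit comparison with a complete intersection to which $I$ is linked; this is strictly weaker than $I^{(n)} = I^n$ for all $n$ and should be more accessible even in this restricted generality.
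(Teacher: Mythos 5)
Your reduction is exactly the paper's: take $k=2$ in the preceding theorem, so everything hinges on $I^{(2)}=I^2$. But where the paper disposes of that step with a single citation to Huneke--Ulrich \cite[Theorem 2.8 b) i)]{HU} (licci plus $R/I$ Gorenstein gives $I^{(2)}=I^2$), your substitute argument has a genuine gap. The theorems of Huneke and Herzog--Simis--Vasconcelos that you want to invoke do not say ``strongly Cohen--Macaulay plus generically a complete intersection implies $\mathrm{gr}_I(R)$ is Cohen--Macaulay''; they require the local generator bound $\mu(I_P) \leqslant \dim R_P$ (the condition usually called $F_1$ or $G_\infty$) at \emph{every} prime $P \supseteq I$, not only at the minimal primes. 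Generic complete intersection gives the bound only in codimension $\operatorname{ht} I$, and licci Gorenstein radical ideals routinely violate it higher up: a height three Gorenstein ideal that is not a complete intersection has at least five generators, so for instance the defining ideal of the Gorenstein monomial curve with semigroup $(5,6,7,8)$ in a four-dimensional regular local ring has $\mu(I)=5>4=\dim R$, and $G_\infty$ already fails at the maximal ideal. So the implication you lean on simply does not apply in the stated generality, and your worry about the ``technical conditions $G_s$'' is precisely where the argument breaks, not a detail to be checked later.

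A second warning sign is that your route would prove $I^{(n)}=I^n$ for all $n$, which is strictly stronger than what is known (or used) for licci Gorenstein ideals; the Huneke--Ulrich theorem gives only the second power, and the remark following the corollary in the paper compares bounds in a way that would be pointless if all symbolic and ordinary powers agreed. The fallback you sketch (self-duality of the resolution plus comparison with a linked complete intersection) is essentially the idea behind the Huneke--Ulrich proof, but as written it is a plan rather than an argument, so the key equality $I^{(2)}=I^2$ is not actually established in your proposal. The fix is simply to cite \cite[Theorem 2.8 b) i)]{HU} (or reproduce its proof), after which your $k=2$ specialization of the preceding theorem finishes the corollary exactly as in the paper.
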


\begin{proof}
By \cite[Theorem 2.8 b) i)]{HU}, $I^{(2)}=I^2$.  The result follows.

\end{proof}

\smallskip

\begin{remark}
We can apply Corollary \ref{licci} to the case of height three Gorenstein ideals, since these are licci \cite{WatanabeJCodim3}. In
the case we take the ideal $I$ generated by generic maximal Pfaffians of a skew symmetric $(2n+1) \times (2n+1)$ matrix, the quotient is a height three strongly F-regular Gorenstein ideal (see Example \ref{determinantal}).  Hence we can compare the two bounds given by Corollary \ref{licci} and Theorem \ref{strongly thm}.
The bound given in Theorem \ref{strongly thm} is that for all $n\geqslant 1$, $I^{(2n-1)}\subseteq I^n$. The bound
of Corollary \ref{licci} is that for all $n \geqslant 1$, $I^{(4\cdot\left \lceil{\frac{n}{2}}\right \rceil)} \subseteq I^n$,
which is slightly worse. Of course as the height increases, the bound given in Corollary \ref{licci} becomes better.
\end{remark}

Other examples where symbolic and ordinary powers agree can be found in \cite{SymbPowersCodim2}.

As a final remark, we note that one may often make a flat extension to study these containments.

\begin{remark}
If $R \longrightarrow S$ is faithfully flat, $P$ is a prime ideal in $R$, and $Q = PS$, then $Q^{(b)} \subseteq Q^a$ implies that $P^{(b)} \subset P^a$. Indeed, $Q \cap R = P$; moreover, nonzero divisors in $R/P$ remain nonzero divisors in $S/Q$ by flatness, so $P^{(b)} \subseteq Q^{(b)} \cap R$. Note also that the value of $h$, the maximal height of an associated prime of $P$, does not increase when passing to $Q$ (cf. \cite[Discussion 2.3 (c)]{comparison}). 
\end{remark}

\section{Equicharacteristic 0}

By using reduction to characteristic $p>0$ techniques, such as in \cite{HHCharZero}, we can generalize the main results to the case of equal characteristic $0$. For information concerning the basic definitions
and relationships see \cite{HW}, or the survey \cite[Appendix C]{SurveyTestIdeals}.

\begin{definition}[dense F-type]
	Let $R$ be a ring essentially of finite type over a field $k$ of characteristic zero. Suppose we are given a model $R_A$ of $R$ over a finitely generated $\mathbb{Z}$-subalgebra $A$ of $k$. We say that $R$ is of \emph{dense F-pure type} (respectively \emph{dense strongly F-regular type}) if there exists a dense subset of closed points $S \subseteq \Spec A$ such that $R_\mu$ is F-pure (respectively strongly F-regular) for all $\mu \in S$.
\end{definition}

\begin{remark}
	Note that the definition above is independent of the choice of $R_A$.
\end{remark}

\begin{theorem}\label{Fpure char 0}
Let $R$ be a regular ring, essentially of finite type over a field of characteristic $0$. Let $I$ be an ideal in $R$ such that $R/I$ is of dense F-pure type, and let $h$ be the largest height of any minimal prime of $I$. Then, for all integers $n \geqslant 1$,
$$I^{(hn-h+1)}\subseteq I^n.$$
\end{theorem}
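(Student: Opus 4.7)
The plan is a standard reduction-to-characteristic-$p$ argument, pulling the conclusion out of Theorem \ref{Fpure} fiber by fiber. I would suppose for contradiction that there is an element $f \in I^{(hn-h+1)} \setminus I^n$ and propagate this failure to a dense family of positive characteristic fibers of a suitable model.

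First I would fix a model: choose a finitely generated $\mathbb{Z}$-subalgebra $A$ of $k$ and an $A$-flat, essentially-of-finite-type $A$-algebra $R_A$ with $R_A \otimes_A k \cong R$, and enlarge $A$ so that it contains the coefficients of chosen generators for $I$, the finitely many minimal primes $P_1, \ldots, P_s$ of $I$, and the element $f$. By generic freeness combined with openness of regularity, flatness, and membership/non-membership in finitely generated ideals, after shrinking $\Spec A$ to a dense open $U$, for every closed $\mu \in U$ the fiber $R_\mu$ is F-finite regular, each $(P_i)_\mu$ is a prime of the same height as $P_i$, these are precisely the minimal primes of $I_\mu$, the inclusion $(I^{(hn-h+1)})_A \otimes R_\mu \subseteq (I_\mu)^{(hn-h+1)}$ holds, and the image $f_\mu$ lies outside $(I_\mu)^n$. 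By the dense F-pure type assumption, the set of closed $\mu$ with $R_\mu/I_\mu$ F-pure is dense, hence meets $U$; pick such a $\mu$.

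Applying Theorem \ref{Fpure} to $I_\mu \subseteq R_\mu$ then gives $f_\mu \in (I_\mu)^{(hn-h+1)} \subseteq (I_\mu)^n$, contradicting $f_\mu \notin (I_\mu)^n$. The main obstacle is the bookkeeping in the model construction, namely verifying that for generic $\mu$ the minimal primes of $I_\mu$ are exactly the reductions of the minimal primes of $I$ with matching heights (so no new or embedded primes appear), and that the symbolic power $I^{(hn-h+1)}$ descends into $(I_\mu)^{(hn-h+1)}$. The standard way through this is to localize at each $P_i$, where $R_{P_i}$ is regular and the symbolic power $P_i^{(m)} R_{P_i} = (P_i R_{P_i})^m$ is an ordinary power that commutes with base change, and then descend the finite primary decomposition $I^{(m)} = \bigcap_i P_i^{(m)}$ by enlarging $A$ to witness it and inverting finitely many elements of $A$ to stabilize the picture. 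Once this compatibility is in place, Theorem \ref{Fpure} closes the argument.
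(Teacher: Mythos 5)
Your argument is correct and is essentially the paper's proof: the paper simply invokes the standard descent machinery of Hochster--Huneke (\cite{HHCharZero}, following \cite[Theorem 4.2]{comparison}) to reduce to characteristic $p$ and then applies Theorem \ref{Fpure}, which is exactly the reduction you carry out, with the details of descending the symbolic power, the minimal primes and their heights, and the non-membership of $f$ spelled out explicitly. The delicate point you flag --- that $I^{(hn-h+1)}$ maps into $(I_\mu)^{(hn-h+1)}$ for generic $\mu$ --- is precisely what that cited descent theory handles, and your localization/primary-decomposition sketch is the standard way to verify it.
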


\begin{proof}
Using the standard descent theory of \cite[Chapter 2]{HHCharZero}, we can reduce the problem to the positive characteristic case, following the same steps as in \cite[Theorem 4.2]{comparison}. After reducing to characteristic $p>0$, the statement follows from Theorem \ref{Fpure}.
\end{proof}

The following conjecture is called the Weak Ordinarity Conjecture. See \cite{Patakfalvi} for a nice
survey on this topic and related matters.  Takagi proved that if the  Weak Ordinarity Conjecture
holds, then log canonical singularities have dense F-pure type:

\begin{conjecture}[Weak ordinarity conjecture] \label{Weak ordinarity conjecture}
Let Y be a smooth, connected projective variety over an algebraically closed field $k$ of characteristic $0$. Given a model $Y_A$ of $Y$ over a finitely generated $\mathbb{Z}$-algebra $A$, the set 
$$\left\lbrace s \in \Spec A \, | \, s \textrm{ is a closed point, the action of Frobenius on } \Ho^{\dim Y_s} \left( Y_s, \mathcal{O}_{Y_s} \right) \textrm{ is bijective} \right\rbrace$$
is dense in $\Spec A$.
\end{conjecture}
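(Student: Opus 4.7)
The Weak Ordinarity Conjecture is a well-known open problem in arithmetic geometry, so what follows is an attack plan rather than a realistic proof sketch. The starting point is to reinterpret the bijectivity of Frobenius on $\Ho^{\dim Y_s}(Y_s, \mathcal{O}_{Y_s})$ in terms of the Newton polygon of crystalline (or equivalently Hodge--Witt) cohomology in top degree: Frobenius acts bijectively on $\Ho^{\dim Y_s}(Y_s, \mathcal{O}_{Y_s})$ exactly when the slope-zero part of $\Ho^{\dim Y_s}_{\mathrm{cris}}(Y_s/W)$ has full rank, which is a weak form of ordinary reduction of $Y_s$. This reformulation places the conjecture squarely inside the classical theory of densities of ordinary primes and brings the substantial existing literature on that subject into play.

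The plan is then to reduce to cases where density of ordinary reduction is understood and to bootstrap. For abelian varieties this is due to Ogus and Noot via Serre--Tate canonical lifts, for K3 surfaces to Bogomolov--Zarhin and later Charles, and for curves it is classical, going back to Deligne. For general $Y$ one might attempt a Lefschetz pencil and induct on dimension, or pass to a closely related variety whose periods are controlled by a Shimura-type datum; in either case, once one has Frobenius realised as an element of a Galois representation on a suitable $\ell$-adic or $p$-adic cohomology, the natural finish is Chebotarev density. One would aim to produce a positive-density set of closed points $s \in \Spec A$ at which the Frobenius conjugacy class has eigenvalues forcing bijectivity on top coherent cohomology.

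The main obstacle, and the reason the conjecture is open, is precisely the step where one needs a Galois-theoretic grip on Frobenius strong enough to detect ordinarity. Without the Mumford--Tate conjecture (or a suitable Tate conjecture) one cannot show that the image of the Galois representation attached to $\Ho^{\dim Y}(Y)$ is large enough to contain Frobenius classes with the desired slope profile, and the Chebotarev step then has nothing to grip on. I would therefore propose first establishing the conjecture \emph{conditionally} on Mumford--Tate, by explicitly identifying the conjugacy classes in the relevant compact Galois group whose action on top crystalline cohomology is bijective and checking that they form a nonempty open subset. An unconditional proof almost certainly requires a genuinely new input on Galois representations of arbitrary smooth projective varieties, which is where I expect the real difficulty to lie and why I would not attempt an unconditional argument directly.
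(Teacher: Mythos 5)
This statement is the Weak Ordinarity Conjecture, which the paper does not prove and does not claim to prove: it is stated as an open conjecture (Conjecture \ref{Weak ordinarity conjecture}) and is only ever used as a \emph{hypothesis}, namely in Takagi's theorem that, assuming it, log canonical singularities have dense F-pure type. So there is no proof in the paper against which your attempt can be measured, and the honest answer is that no proof is expected here.

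Your proposal correctly recognizes this and is explicit that it is a strategy sketch rather than an argument, which is the right stance. As a sketch it is a fair summary of the known landscape (slope-zero interpretation via crystalline cohomology, the ordinary-reduction density results for abelian varieties, K3s, and curves, and the idea of a Chebotarev-type argument conditional on large Galois image or Mumford--Tate), but none of these steps closes the gap you yourself identify: there is no unconditional control of the Galois image, and even the conditional route via Mumford--Tate does not obviously produce Frobenius classes with the required slope profile in top coherent cohomology for arbitrary smooth projective $Y$, nor does it handle the reduction from general $Y$ to the special classes where ordinarity densities are known (a Lefschetz-pencil induction does not propagate ordinarity of the middle coherent cohomology). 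In short: nothing to correct, because nothing is proved, either in your proposal or in the paper; if you need the consequence for the paper's results, the unconditional route is the one the paper actually takes, via Hara's theorem that log-terminal singularities are of dense strongly F-regular type, which requires no ordinarity input.
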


\begin{theorem}[Theorem 2.11 in \cite{TakAdjoint}]
	Let X be a log canonical singularity over an algebraically
closed field $k$ of characteristic 0. Given a model $X_A$ of $X$ over a finitely generated $\mathbb{Z}$-algebra $A$,
$$\left\lbrace s \in \Spec A \, | \, s \textrm{ is a closed point such that $X_s$ is F-pure} \right\rbrace$$
is dense in $\Spec A$, if we assume Conjecture \ref{Weak ordinarity conjecture}.
\end{theorem}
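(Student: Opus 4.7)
The plan is to use reduction modulo $p$ to transport the log canonical condition on $X$ to a Frobenius splitting of each fiber $X_s$ for $s$ in a dense subset of $\Spec A$. First I would fix a log resolution $\pi \colon Y \to X$, whose existence is guaranteed by Hironaka, and spread out $X$, $Y$, $\pi$, and the reduced exceptional simple normal crossings divisor $E$ over a finitely generated $\mathbb{Z}$-subalgebra $A \subseteq k$. After possibly enlarging $A$ by inverting finitely many elements, I may assume that for every closed point $s \in \Spec A$, the morphism $\pi_s \colon Y_s \to X_s$ is a log resolution with exceptional divisor $E_s$, and that all relevant direct image sheaves commute with base change. The hypothesis that $X$ is log canonical translates into $\mathcal{J}(X) = \mathcal{O}_X$, equivalently into the statement that the trace along $\pi$ of $\omega_Y(E)$ fills up $\omega_X$.

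Next, I would apply Conjecture \ref{Weak ordinarity conjecture} (to $Y$ and, if needed, to strata along the snc boundary) to obtain a dense subset $T \subseteq \Spec A$ of closed points at which Frobenius acts bijectively on $\Ho^{\dim Y_s}(Y_s, \mathcal{O}_{Y_s})$. Via Serre duality, this bijectivity corresponds to a Cartier operator on $Y_s$ that is compatible with $E_s$ and that dualizes to a splitting of $\mathcal{O}_{Y_s} \hookrightarrow F_\ast \mathcal{O}_{Y_s}(E_s)$. Under ordinarity one also obtains a characteristic-$p$ Grauert-Riemenschneider type vanishing, so that this splitting descends along $\pi_s$; the log canonicity, exactly in the form of the trace identity above, ensures that the descended morphism factors through a splitting of $\mathcal{O}_{X_s} \hookrightarrow F_\ast \mathcal{O}_{X_s}$. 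This produces F-purity of $X_s$ for all $s \in T$.

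Since $T$ is dense by construction, the set in the statement contains $T$ and is therefore dense in $\Spec A$, finishing the argument. The main obstacle is the middle step: rigorously translating ordinarity of Frobenius on the top cohomology of the resolution into an actual Frobenius splitting of $\mathcal{O}_{X_s}$ compatible with the log canonical data. This is essentially where Takagi's theorem lives, and it requires assembling the Deligne-Illusie decomposition of the de Rham complex (to deduce Kodaira-type vanishing from ordinarity), the identification of the test ideal $\tau(X_s)$ with the mod-$p$ reduction of the multiplier ideal $\mathcal{J}(X)$ in the ordinary case, and the characterization of F-purity via the condition $\tau(X_s) = \mathcal{O}_{X_s}$ on normal rings. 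I would expect to spend the bulk of the work on making this dictionary precise in a neighborhood of the log canonical (non-klt) locus, where the splittings are tightest.
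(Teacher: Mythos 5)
First, a point of comparison: the paper does not prove this statement at all --- it is quoted as Theorem 2.11 of \cite{TakAdjoint} and used as a black box, so there is no internal proof to measure your sketch against; it must be judged as a sketch of Takagi's theorem. Judged that way, it has a genuine gap at its core. You translate ``$X$ is log canonical'' into $\mathcal{J}(X)=\mathcal{O}_X$ and propose to conclude F-purity of $X_s$ from $\tau(X_s)=\mathcal{O}_{X_s}$. But triviality of the multiplier ideal characterizes \emph{klt} singularities, not log canonical ones, and triviality of the test ideal characterizes \emph{strong F-regularity}, not F-purity. With that dictionary your argument, if completed, would prove that klt singularities have dense strongly F-regular type --- which is Hara's theorem \cite{Hara}, holds unconditionally, and is precisely what the paper invokes elsewhere; no ordinarity hypothesis is needed for it. The entire content of Takagi's theorem is the boundary case where discrepancies equal $-1$ and $\mathcal{J}(X)$ is \emph{not} trivial. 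Handling it requires the lc/F-pure analogues of these objects (adjoint ideals of $X$ inside a smooth ambient variety and test ideals along subvarieties, together with inversion-of-adjunction statements on both sides), and the reduction of Conjecture \ref{Weak ordinarity conjecture} to a comparison of \emph{those} ideals, in the spirit of Musta\c{t}\u{a}--Srinivas; your sketch never engages with this, and your own two ``equivalent'' formulations of log canonicity (triviality of $\mathcal{J}(X)$ versus surjectivity of the trace of $\omega_Y(E)$) are in fact different conditions, which signals the confusion.

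There are also two secondary gaps. Conjecture \ref{Weak ordinarity conjecture} concerns smooth connected \emph{projective} varieties, whereas your $Y$ is only projective over the (essentially local or affine) singularity $X$, so the conjecture cannot be applied to $Y$ or to ``strata along the snc boundary'' as stated; transporting ordinarity from genuinely projective varieties into the local resolution picture is a substantive step in Musta\c{t}\u{a}--Srinivas and in Takagi's proof, not a formality. And Grauert--Riemenschneider vanishing fails in positive characteristic in general, so the asserted descent of the splitting along $\pi_s$ needs an actual duality/trace argument rather than an appeal to a ``characteristic-$p$ GR-type vanishing''; as written this step would not go through.
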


Similarly to Theorem \ref{Fpure char 0}, we obtain a characteristic $0$ version of Theorem \ref{strongly thm}. 

\begin{theorem}\label{denseF-reg} Let $R$ be a regular ring, essentially of finite type over a field of characteristic $0$. Let $I$ be an ideal in $R$ such that $R/I$ is of dense strongly F-regular type, and let $h$ be the largest height of any minimal prime of $I$. Then, for all integers $n \geqslant 1$,
$$I^{((h-1)n+1)}\subseteq I^{n+1}.$$
\end{theorem}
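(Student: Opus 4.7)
The plan is to mirror the argument used for the characteristic zero F-pure statement (Theorem \ref{Fpure char 0}), reducing the claim to its positive characteristic counterpart, Theorem \ref{strongly thm}, by the standard descent theory of \cite{HHCharZero}. Since the characteristic $p$ input is already in hand, the work consists entirely of setting up descent carefully enough that both sides of the desired containment specialize as expected.

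First I would fix a finitely generated $\mathbb{Z}$-subalgebra $A \subseteq k$ and a model $R_A$ of $R$ together with a model $I_A$ of $I$, enlarging $A$ as needed so that: (a) $R_A$ is smooth over $A$, so that the fibers $R_\mu$ are regular for each closed point $\mu \in \Spec A$; (b) the finitely many associated primes of $I$ and their primary decompositions descend compatibly, so that for $\mu$ ranging over a dense open subset of $\Spec A$ the fiber $I_\mu$ is radical of the same height $h$ and $(I^{(m)})_\mu = I_\mu^{(m)}$ for each of the finitely many values of $m$ that will appear in the argument; (c) the dense strongly F-regular type hypothesis supplies a dense subset $S \subseteq \Spec A$ of closed points for which $R_\mu / I_\mu$ is strongly F-regular. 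Intersecting $S$ with the dense open sets from (a) and (b) keeps $S$ dense.

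For each $\mu$ in this refined dense set, Theorem \ref{strongly thm} applies to $I_\mu \subseteq R_\mu$, giving
$$I_\mu^{((h-1)n+1)} \subseteq I_\mu^{n+1}.$$
To descend, suppose for contradiction that some $f \in I^{((h-1)n+1)}$ fails to lie in $I^{n+1}$. After possibly enlarging $A$ once more, we may arrange that $f$ descends to an element $f_A \in R_A$ lying in $(I_A)^{((h-1)n+1)}$, and that $R_A / I_A^{n+1}$ is $A$-free on some neighborhood of the generic point (generic freeness). The image of $f$ in $R/I^{n+1}$ is then nonzero, and by generic freeness it remains nonzero modulo $\mu$ for $\mu$ in a dense open subset of $\Spec A$. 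Intersecting this dense open set with the dense set from the previous paragraph gives a $\mu$ for which both $f_\mu \in I_\mu^{((h-1)n+1)} \subseteq I_\mu^{n+1}$ (by the characteristic $p$ theorem) and $f_\mu \notin I_\mu^{n+1}$ (by the nonvanishing in $R_A/I_A^{n+1}$ after reduction), a contradiction.

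The main technical obstacle is step (b): ensuring that symbolic powers commute with reduction modulo $\mu$ on a dense subset of $\Spec A$. This is precisely the delicate point handled in the descent setup used for \cite[Theorem 4.2]{comparison} and Theorem \ref{Fpure char 0}, where one spreads out a primary decomposition of $I$ and checks that the components remain primary of the correct height after reduction generically. Once that machinery is invoked, the argument is a direct transcription of the F-pure case with Theorem \ref{strongly thm} playing the role previously played by Theorem \ref{Fpure}.
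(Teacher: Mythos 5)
Your proposal is correct and follows essentially the same route as the paper: the paper's proof likewise invokes the standard descent theory of \cite[Chapter 2]{HHCharZero}, following the steps of \cite[Theorem 4.2]{comparison} to spread out the data and reduce modulo a suitable dense set of closed points, and then applies Theorem \ref{strongly thm} in characteristic $p$. The details you supply (generic smoothness, compatibility of symbolic powers with reduction on a dense set, generic freeness, and intersecting with the dense set where the fibers are strongly F-regular) are exactly the content of the cited descent machinery that the paper leaves implicit.
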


\begin{proof} Our assumption means that given models $R_A$ and $(R/I)_A$ of $R$ and $R/I$ over a finitely generated $\mathbb{Z}$-subalgebra $A$ of $k$, there exists a dense subset of closed points $S \subseteq \Spec A$ such that $(R/I)_\mu$ is strongly F-regular for all $\mu \in S$. 
	
As in the proof of \ref{Fpure char 0}, we use the standard descent theory of \cite[Chapter 2]{HHCharZero}, reducing the problem to the positive characteristic case, following the same steps as in \cite[Theorem 4.2]{comparison}. After reducing to characteristic $p>0$, the statement follows from Theorem \ref{strongly thm}. \end{proof}

\smallskip

We may apply this result to the case in which $R/I$ has log-terminal singularities.
We refer to \cite[\S 4.1]{TakagiWatanabe} for a definition of log-terminal singularities. Note that we do not need to assume that $R/I$ is $\mathbb{Q}$-Gorenstein in this application, since De Fernex and Hacon \cite{defernexhacon} have given a definition and treatment of log-terminal singularities without this assumption, and these are of dense strongly F-regular type \cite[Lemma 1.6]{TakagiAdj}. We thank the anonymous referee who pointed this out to us.

\smallskip

\begin{theorem}
Let $R$ be a regular ring, essentially of finite type over a field of characteristic $0$. Let $I$ be an ideal in $R$ such that $R/I$ has log-terminal singularities, and let $h$ be the largest height of any minimal prime of $I$. Then, for all integers $n \geqslant 1$,
$$I^{((h-1)n+1)}\subseteq I^{n+1}.$$
\end{theorem}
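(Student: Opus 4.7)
The plan is to reduce this statement directly to Theorem \ref{denseF-reg}, since the only new hypothesis is that $R/I$ has log-terminal singularities rather than being of dense strongly F-regular type. The bridge between these two notions is exactly what the paragraph preceding the theorem supplies: by De Fernex--Hacon's definition of log-terminal singularities (which does not require the $\mathbb{Q}$-Gorenstein assumption), such singularities are known to be of dense strongly F-regular type, via \cite[Lemma 1.6]{TakagiAdj}.

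Concretely, I would proceed as follows. First, invoke the cited result of De Fernex--Hacon/Takagi to conclude that the hypothesis ``$R/I$ has log-terminal singularities'' implies that $R/I$ is of dense strongly F-regular type in the sense defined earlier in this section. Second, apply Theorem \ref{denseF-reg} verbatim: since $R$ is regular and essentially of finite type over a field of characteristic $0$, and $R/I$ is now known to be of dense strongly F-regular type, the conclusion $I^{((h-1)n+1)} \subseteq I^{n+1}$ for all $n \geqslant 1$ is immediate, where $h$ is the largest height of any minimal prime of $I$.

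There is essentially no obstacle here: the entire content of the statement is already packaged in Theorem \ref{denseF-reg}, and the implication ``log-terminal $\Rightarrow$ dense strongly F-regular type'' is an external black-box result. The only minor subtlety worth noting in the write-up is the remark, already made in the excerpt, that one does not need to assume $R/I$ is $\mathbb{Q}$-Gorenstein, thanks to De Fernex--Hacon's extension of the definition. Thus the proof reduces to a two-sentence citation-and-invocation argument.
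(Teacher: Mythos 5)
Your proposal is correct and matches the paper's proof: both reduce the statement to Theorem \ref{denseF-reg} by invoking, as a black box, the fact that log-terminal singularities are of dense strongly F-regular type. The only cosmetic difference is the citation used for that implication (the paper's proof cites Hara's theorem, while you cite the De Fernex--Hacon/Takagi result from the preceding remark, which is the version needed without the $\mathbb{Q}$-Gorenstein hypothesis); the argument is otherwise identical.
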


\begin{proof}
	By \cite[Theorem 5.2]{Hara}, $R$ is of dense strongly F-regular type. We apply Theorem \ref{denseF-reg} to finish the proof.
\end{proof}

\section*{Funding}

The second author was partially supported by NSF grant DMS-1460638, and thanks  them for their support.

\section*{Acknowledgements}

The first author thanks Alessandro De Stefani, Mel Hochster, Jack Jeffries, Luis N\'{u}\~{n}ez-Betancourt, Alexandra Seceleanu, and Karen Smith for conversations and helpful suggestions. The authors would like to thank Anurag Singh for pointing us to example \ref{singh}.

\bibliographystyle{alpha}
\bibliography{References}

\end{document}